\crefname{hypothesis}{Hypothesis}{Hypotheses}
\Crefname{ALC@unique}{Line}{Lines}
\newtheorem{example}{Example}[section] 
\colorlet{texcscolor}{blue!50!black}
\colorlet{texemcolor}{red!70!black}
\colorlet{texpreamble}{red!70!black}
\colorlet{codebackground}{black!25!white!25}
\lstdefinestyle{siamlatex}{%
  style=tcblatex,
  texcsstyle=*\color{texcscolor},
  texcsstyle=[2]\color{texemcolor},
  keywordstyle=[2]\color{texemcolor},
  moretexcs={cref,Cref,maketitle,mathcal,text,headers,email,url},
}
\DeclareTotalTCBox{\code}{ v O{} }
{ fontupper=\ttfamily\color{black},
  nobeforeafter,
  tcbox raise base,
  colback=codebackground,colframe=white,
  top=0pt,bottom=0pt,left=0mm,right=0mm,
  leftrule=0pt,rightrule=0pt,toprule=0mm,bottomrule=0mm,
  boxsep=0.5mm,
  #2}{#1}
\patchcmd\newpage{\vfil}{}{}{}
\title{Correction of  weighted and shifted seven-step BDF for parabolic equations with nonsmooth data \thanks{ \funding{This work was supported by the National Natural Science Foundation of China under Grant No.12471381 and Science Fund for Distinguished Young Scholars of Gansu Province under Grant No. 23JRRA1020.}}}
\author{
 Minghua Chen\thanks{School of Mathematics and Statistics, Gansu Key Laboratory of Applied Mathematics and Complex Systems,
 Lanzhou University, Lanzhou 730000, P.R. China. 
Email address: chenmh@lzu.edu.cn}
\and  Jiankang Shi\thanks{Department of Mathematics, Northwest Normal University, Lanzhou 730070,  P.R. China. 
Email address: shijk20@lzu.edu.cn}
\and  Fan Yu\thanks{Institute for Math \& AI, Wuhan University, Wuhan 430072, P.R. China.
Email address: yufan24@whu.edu.cn}
\and  Zhi Zhou\thanks{Department of Applied Mathematics, The Hong Kong Polytechnic University, Kowloon, Hong Kong, P.R. China. 
Email address: zhizhou@polyu.edu.hk}}
\begin{document}
\maketitle


\begin{abstract}
It is well known that the seven-step backward difference formula (BDF) is unstable for the parabolic equations, since it is not even zero-stable.
However, a linear combination of two non zero-stable schemes, namely the seven-step BDF and its shifted counterpart, can yield $A(\alpha)$ stable. Based on this observation,  the authors [Akrivis, Chen, and Yu, IMA J. Numer. Anal., DOI:10.1093/imanum/drae089] propose the weighted and shifted seven-step BDF methods for the parabolic equations, which  stability regions are larger than the standard BDF.
Nonetheless, this approach is not directly applicable for the parabolic equations with nonsmooth data, which  may suffer from severe order reduction.
This motivates us to design proper correction time-stepping schemes  to restore the desired $k$th-order convergence rate of the $k$-step weighted and shifted BDF  ($k\leq 7$) convolution quadrature for the parabolic problems.
We prove that the desired $k$th-order convergence can be recovered even if the source term is  discontinuous  and the initial value is nonsmooth data.
Numerical experiments illustrate the theoretical results.
\end{abstract}

\begin{keywords}
weighted and shifted seven-step BDF method, convolution quadrature, nonsmooth data, discontinuous source,   error estimate
\end{keywords}

\begin{AMS}
35K10, 65M06, 65M12
\end{AMS}

\section{Introduction}\label{Sec1}
We study the convolution quadrature generated by the $k$-step  weighted and
shifted backward difference formula (WSBDF$k$, $k\leq 7$),
for solving the parabolic equations  with nonsmooth data or  discontinuous source \cite{Hettlich2001Identification,Thomee2006Galerkin}
\begin{equation}\label{PQ}
\begin{aligned}
&\partial_t u - Au(t) = f(t) \quad  {\rm in} \quad \Omega \times (0,T)
\end{aligned}
\end{equation}
with the initial value  $u(0)=v$. The operator $A$  denotes the Laplacian $\Delta$ on a convex polyhedral domain $\Omega \subset \mathbb{R}^d$  with  homogeneous Dirichlet boundary conditions,
and $\mathcal{D}(A) = H^{1}_{0}(\Omega) \cap H^{2}(\Omega)$, where $H^{1}_{0}(\Omega)$ and $H^{2}(\Omega)$ denote the standard Sobolev spaces.

The nonsmooth data or discontinuous source term problems of the model \eqref{PQ} arise in multi-material systems of physical and chemical applications.
When multiple materials are in contact with each other,
the conductivity coefficient may vary and exhibit discontinuities at the interface of contact,
such as the singularly perturbed parabolic problems \cite{Aarthika2020nonuniform,Priyadarshana2024Robust,Rao2018Numerical} and  heat conduction procedures \cite{Gianni1993Some,Hettlich2001Identification,Zhu2004Convergence}, e.g.,
\begin{equation*}\tag{$*$}
\partial_t u - \Delta u(t) = \chi_{_D} \quad  {\rm in} \quad \Omega \times (0,T).
\end{equation*}
Here $\chi_{_D}$ denotes the characteristic function of the domain $D$, namely,
\begin{equation*}
	\chi_{_D}=
	\left\{
	\begin{split}
		& 1, \quad x \in D,\\
		& 0, \quad x \notin D.
	\end{split}
	\right.
\end{equation*}

Numerical methods for the time discretization  of the parabolic problems \eqref{PQ} by convolution quadrature have been extensively investigated in \cite{Li2021highorder,Lubich1993RungeKutta,Sheen2000parallel,Sheen2003parallel,Thomee2006Galerkin}.
 Convolution quadrature  techniques have found applications in various areas, including the Navier-Stokes equation
\cite{Lubich1993RungeKutta},  subdiffusion equation \cite{Deng2018time,Jin2017Correction,Shi2023Highorder,Yan2018analysis},  and nonlinear fractional problems \cite{Wang2020high}.
No seventh order convolution quadrature  generated by BDF$7$ was given there, since it is not zero-stable.
Recently,  based on the energy technique relying  on suitable multipliers \cite{Akrivis2021energy},  a class of simple and efficient WSBDF$7$ methods  for the parabolic equations \eqref{PQ} were proposed  in
\cite{Akrivis2024weighted}.
The proposed WSBDF$k$ methods \cite{Akrivis2024weighted}, including the standard BDF$k$ \cite{Akrivis2021energy},
have been widely applied to various scientific phenomena, such as mean curvature flow \cite{EGK,KL},  gradient flows \cite{HS}, and fractional equations \cite{CYZ}. Nonetheless, the WSBDF$7$ method \cite{Akrivis2024weighted} is not directly  applicable to the parabolic equations with nonsmooth data, which  may suffer from a severe order reduction, as shown in Table \ref{table:2.1}.

 To fill in this gap,
we propose a smoothing method for time-stepping schemes, where nonsmooth data are regularized by using an $m$-fold integral-differential calculus \cite{Shi2023Highorder},
and the equation is discretized by the $k$-step WSBDF convolution quadrature, referred to as the ID$m$-WSBDF$k$ method ($k\leq 7$). However, this method also exhibits first- or second-order convergence, as observed in Table \ref{table:2.2}. This motivates us to design the corrected  ID$m$-WSBDF$k$ method to restore the desired $k$th-order convergence rate in this work.
	
The outline of this paper is as follows. In the next section, we design the numerical schemes, including the WSBDF$k$, ID$m$-WSBDF$k$, and corrected ID$m$-WSBDF$k$. In Section \ref{Sec3}, we establish the discretized solution representation for the corrected ID$m$-WSBDF$k$. A few technical lemmas are provided in Section \ref{Sec4}. Optimal error estimates with $\mathcal{O}(\tau^{\min\{m+1,k\}})$ are proved in Section \ref{Sec5}. To demonstrate the effectiveness of the presented schemes, results of numerical experiments are reported in Section \ref{Sec6}.

\section{Numerical schemes}\label{Sec2}
Let $V(t) = u(t) - u(0) = u(t) -  v$ with $V(0) = 0$.
Then we can rewrite \eqref{PQ} as
\begin{equation}\label{RPQ1}
V'(t) - A V(t)=  Av + f(t), \quad 0<t\leq T,
\end{equation}
which is equivalent to
\begin{equation}\label{RPQ}
V'(t) - A V(t)=\frac{Av}{\Gamma(m+1)}  \partial^{m}_{t}  t^{m}  + \partial^{m}_{t} F(t), \quad 0<t\leq T.
\end{equation}
Here, the $m$-fold integral calculus of $f(t)$ is defined by \cite[p.64]{Podlubny1999Fractional}
\begin{equation}\label{smoothing1}
F(t)=J^{m}f(t)=
\frac{1}{\Gamma(m)}\int_0^t(t-\tau)^{m-1} f(\tau)d\tau=\frac{t^{m-1}}{\Gamma(m)} \ast f(t), \quad 1\leq m \leq  k\leq 7,
\end{equation}
{ and $\ast$ denotes the Laplace convolution: $(f\ast g)(t) = \int_{0}^{t} f(t-s) g(s) ds$.}

Let $N\in \mathbb{N},$ $\tau=T/N$ be the time step, and $t_{n}=n \tau$, $n=0,\dotsc ,N$ be a uniform partition of the interval $[0,T]$.
We recursively define a sequence of approximations $V^{n}$ to the nodal values $ V(t_{n})$ of the exact solution.
The convolution quadrature generated by $k$-step backward difference formula,  shifted backward difference formula (SBDF$k$) and  weighted and shifted backward difference formula, respectively,  approximates to the derivative $\partial_{t}\varphi(t_n)$, $\partial_{t}\varphi(t_{n-1})$
and $\beta\partial_{t}\varphi(t_n)+(1-\beta)\partial_{t}\varphi(t_{n-1})$
by
\begin{equation}\label{add2.3}
\begin{split}
& {_{b} \partial_{\tau}}\varphi^{n}=\frac{1}{\tau}\sum^{n}_{j=0} \overline{b}_{j} \varphi^{n-j}, ~~
 {_{s}\partial_{\tau}}\varphi^{n} = \frac{1}{\tau}\sum^{n}_{j=0} \overline{s}_{j} \varphi^{n-j} ~~ {\rm and} ~~
 {_{w}\partial_{\tau}}\varphi^{n}=\frac{1}{\tau}\sum^{n}_{j=0} w_{j} \varphi^{n-j}.
\end{split}
\end{equation}
Here  $ \overline{b}_{j}$, $\overline{s}_{j}$ and $w_{j}$, respectively, are the coefficients  in the series expansion of the following characteristic polynomials
\begin{equation}\label{CWSBDF}
\begin{split}
{\rho_{b}}(\xi)
& = \frac{1}{\tau}\sum^{\infty}_{j=0} \overline{b}_{j} \xi^{j}  = \frac{1}{\tau} \sum_{j=1}^{k} \frac{1}{j} (1-\xi)^{j}, \\
{\rho_{s}}(\xi)
& = \frac{1}{\tau}\sum^{\infty}_{j=0} \overline{s}_{j} \xi^{j}  = \frac{1}{\tau} \left( \sum_{j=1}^{k} \frac{1}{j} (1-\xi)^{j} -   \sum_{j=2}^{k} \frac{1}{j-1} (1-\xi)^{j} \right), \\
{\rho_{w}}(\xi)
& =\frac{1}{\tau}\sum^{\infty}_{j=0} w_{j} \xi^{j} = \beta {\rho_{b}}(\xi)   +  (1-\beta) {\rho_{s}}(\xi).
\end{split}
\end{equation}
{
It is worthwhile to note that the second term of the second formula in \eqref{CWSBDF} automatically vanishes when $k=1$.
In fact, the coefficients in \eqref{add2.3} and the characteristic polynomials in \eqref{CWSBDF} depend on $k$, i.e., $ \overline{b}_{j} := \overline{b}_{j}^{(k)}$, $\rho_{b}(\xi):=\rho_{b}^{(k)}(\xi)$ and ${_{b} \partial_{\tau}}:={_{b} \partial_{\tau}^{(k)}}$.
For convenience, and where no confusion arises, we omit the superscript $(k)$ throughout the paper.
}

Then the WSBDF$k$ method for \eqref{RPQ1} is designed by \cite{Akrivis2024weighted}
\begin{equation}\label{WSBDF}
{_{w}\partial_{\tau}} V^{n} - \beta A V^{n} - (1-\beta) A V^{n-1}= \beta Av + (1-\beta) Av + \beta f^{n}+(1-\beta) f^{n-1}, \quad \beta\geq 3
\end{equation}
{ with $V^{0}=0$.}
In fact, $\beta>1/2$ for $k=3,4,5$ and $\beta>2.6$ for $k=6,7$, see Theorem 5 in \cite{Li1991linear}.  For convenience, we take $\beta\geq 3$ throughout this paper.
\begin{remark}(Stability regions)\label{remarkad2.1}
The BDF$k$ methods are $A(\overline{\vartheta}_{k})$-stable with $\overline{\vartheta}_{1}=\overline{\vartheta}_{2}=90^\circ$,
$\overline{\vartheta}_{3}\approx 86.03^\circ, \overline{\vartheta}_{4}\approx 73.35^\circ, \overline{\vartheta}_{5}\approx 51.84^\circ,$ and $\overline{\vartheta}_{6} \approx 17.84^\circ$;
see \cite[Section V.2]{Hairer2010solving}.
The WSBDF$k$ methods are $A({\vartheta}_{k})$-stable with  $\overline{\vartheta}_{k}<  {\vartheta}_{k} < \overline{\vartheta}_{k-1}$ for $k=3,4,5,6.$ It can  be shown that $ {\vartheta}_{k} \to \overline{\vartheta}_{k-1}$ as $\beta\to \infty$
for $k=3,\dotsc,7;$ see Remark  2.1 in \cite{Akrivis2024weighted}.
\end{remark}

We first numerically check the convergence order of schemes \eqref{WSBDF} with nonsmooth data  through an example.
\begin{example}\label{EX1}
Let $T=1$ and $\Omega=(-1, 1)$. Consider the initial-boundary value problem with the initial condition $v(x)= \sqrt{1-x^2}$ and the source function $f(x,t)= 0$.
\end{example}
\begin{table}[H]  
{\small \begin{center}
\caption{The discrete $L^2$-norm $||u^{N}-u^{2N}||$ and convergence order of schemes \eqref{WSBDF} with $\beta=3$.}
\begin{tabular}{|c| c c c c c |c|}
\hline
  $k$    &  $N=100$    & $N=200$     & $N=400$     &  $N=800$    &  $N=1600$   &  Rate        \\ \hline
  1      & 9.8131e-03  & 4.8676e-03  & 2.4228e-03  & 1.2085e-03  & 6.0352e-04  & $\approx$ 1.00         \\ \hline
  2      & 4.2607e-03  & 2.0379e-03  & 9.9764e-04  & 4.9369e-04  & 2.4558e-04  & $\approx$ 1.00         \\ \hline
  3      & 4.2870e-03  & 2.0458e-03  & 9.9977e-04  & 4.9424e-04  & 2.4572e-04  & $\approx$ 1.00         \\ \hline
  4      & 4.2883e-03  & 2.0460e-03  & 9.9978e-04  & 4.9424e-04  & 2.4572e-04  & $\approx$ 1.00         \\ \hline
  5      & 4.2882e-03  & 2.0460e-03  & 9.9978e-04  & 4.9424e-04  & 2.4572e-04  & $\approx$ 1.00        \\ \hline
  6      & 4.2882e-03  & 2.0460e-03  & 9.9978e-04  & 4.9424e-04  & 2.4572e-04  & $\approx$ 1.00         \\ \hline
  7      & 4.2875e-03  & 2.0460e-03  & 9.9978e-04  & 4.9424e-04  & 2.4572e-04  & $\approx$ 1.00        \\ \hline
\end{tabular}\label{table:2.1}
\end{center}}
\end{table}

From Table \ref{table:2.1}, we find that the numerical scheme \eqref{WSBDF} with nonsmooth data   just has first-order convergence rate, which  is suffer from a severe order reduction. In fact,
for homogeneous parabolic problem $f(x,t)=0$, there exists  \cite[p.39]{Thomee2006Galerkin}
$$\| \partial_{t}u(t) \|_{L^{2}(\Omega)} \leq C t^{-1} \|u(0) \|_{L^{2}(\Omega)}.$$  This shows that $u$ has an initial layer at $t \rightarrow 0^{+}$ (i.e., unbounded near $t = 0$).
Hence, the high-order convergence rates may not hold for nonsmooth data.

To restore the desired
$k$th-order convergence rate,   the nonsmooth data would be corrected \cite{Jin2017Correction} at the starting $k-1$ steps    or   regularized \cite{Shi2023Highorder} by using an $m$-fold integral-differential (ID$m$) calculus.
Here we  develop  the ID$m$ method  for nonsmooth data,
and the equation \eqref{RPQ} is discretized by the $k$-step WSBDF convolution quadrature, called ID$m$-WSBDF$k$ method in \eqref{IDWSBDFad1}.
Namely,
\begin{equation}\label{IDWSBDFad1}
\begin{split}
& {_{w}\partial_{\tau}} V^{n} - \beta AV^{n} - (1-\beta)  A V^{n-1} \\
& \!=\!  \frac{\beta Av }{\Gamma(m+1)} {_{b} \partial^{m}_{\tau}} t_{n}^{m}   \! +\!  \frac{(1-\beta) Av}{\Gamma(m+1)}   {_{s}\partial}^{m}_{\tau} t_{n+m-1}^{m}
\!+ \!\beta {_{b} \partial^{m}_{\tau}} F^{n} + (1-\beta)  {_{s}\partial}^{m}_{\tau} F^{n+m-1}.
\end{split}
\end{equation}
The  difference operator  ${_{b}\partial^{m}_{\tau}}$ and
the shifted difference operator$ {_{s}\partial^{m}_{\tau}}$
are defined by
\begin{equation*}
\begin{split}
& {_{b}\partial^{m}_{\tau}} \varphi^{n}=\frac{1}{\tau^{m}}\sum^{n}_{j=0} b_{j} \varphi^{n-j}, ~~
 {_{s}\partial^{m}_{\tau}}\varphi^{n} = \frac{1}{\tau^{m}}\sum^{n}_{j=0} s_{j} \varphi^{n-j}
\end{split}
\end{equation*}
with   the characteristic polynomials
\begin{equation}\label{CWSBDFkm}
\begin{split}
{\rho^{m}_{b}} (\xi)
& = \frac{1}{\tau^{m}}\sum^{\infty}_{j=0} b_{j} \xi^{j}  = \frac{1}{\tau^{m}} \left(\sum_{j=1}^{k} \frac{1}{j} (1-\xi)^{j} \right)^{m}, \\
{\rho^{m}_{s}}(\xi)
& = \frac{1}{\tau^{m}}\sum^{\infty}_{j=0} s_{j} \xi^{j}  = \frac{1}{\tau^{m}} \left( \sum_{j=1}^{k} \frac{1}{j} (1-\xi)^{j} -   \sum_{j=2}^{k} \frac{(1-\xi)^{j}}{j-1} \right)^{m}.
\end{split}
\end{equation}
\begin{table}[H]  
{\small \begin{center}
\caption{The discrete $L^2$-norm $||u^{N}-u^{2N}||$ and convergence order of schemes \eqref{IDWSBDFad1} with $k=7$, $\beta=3$.}
\begin{tabular}{|c| c c c c c |c|}
\hline
  $m$    &  $N=100$    & $N=200$     & $N=400$     &  $N=800$    &  $N=1600$   &  Rate        \\ \hline
  1      & 1.1222e-05  & 1.2056e-06  & 3.0138e-07  & 7.5345e-08  & 1.8836e-08  & $\approx$ 2.00         \\ \hline
  2      & 3.9684e-05  & 1.6570e-05  & 8.1290e-06  & 4.0265e-06  & 2.0039e-06  & $\approx$ 1.00         \\ \hline
  3      & 6.9549e-05  & 3.1113e-05  & 1.5267e-05  & 7.5633e-06  & 3.7643e-06  & $\approx$ 1.00         \\ \hline
  4      & 7.8673e-05  & 3.7233e-05  & 1.8273e-05  & 9.0534e-06  & 4.5061e-06  & $\approx$ 1.00         \\ \hline
  5      & 8.6832e-05  & 4.1921e-05  & 2.0576e-05  & 1.0195e-05  & 5.0744e-06  & $\approx$ 1.00        \\ \hline
  6      & 9.6487e-05  & 4.6661e-05  & 2.2905e-05  & 1.1349e-05  & 5.6493e-06  & $\approx$ 1.00        \\ \hline
  7      & 1.0541e-04  & 5.0733e-05  & 2.4907e-05  & 1.2342e-05  & 6.1434e-06  & $\approx$ 1.00        \\ \hline
  \end{tabular}\label{table:2.2}
\end{center}}
\end{table}

It seems that the ID$m$-WSBDF$7$ method is valid for nonsmooth data, since it  restores  the second-order convergence with $m=1$,  see Table \ref{table:2.2}. However, it still drops down to the first-order convergence with $2\leq m \leq 7$, $k=7$.
This motivates us to design the corrected  ID$m$-WSBDF$k$ method  to restore the desired
$k$th-order convergence rate. Namely,
\begin{equation}\label{IDWSBDF}
\begin{split}
& {_{w}\partial_{\tau}} V^{n} - \beta AV^{n} - (1-\beta)  A V^{n-1} \\
& =  \frac{\beta Av }{\Gamma(m+1)} {_{b} \partial^{m}_{\tau}} t_{n}^{m}    +  \frac{(1-\beta) Av}{\Gamma(m+1)} \left(  {_{s}\partial}^{m}_{\tau} t_{n+m-1}^{m} - \frac{1}{\tau^{m}} \sum^{n+m-1}_{j=n} s_{j} t_{n+m-1-j}^{m} \right) \\
& \quad + \beta {_{b} \partial^{m}_{\tau}} F^{n} + (1-\beta) \left(  {_{s}\partial}^{m}_{\tau} F^{n+m-1} - \frac{1}{\tau^{m}} \sum^{n+m-1}_{j=n} s_{j} F^{n+m-1-j} \right),
\end{split}
\end{equation}
which is equivalent to
\begin{equation}\label{IDWSBDF1}
\begin{split}
& \frac{1}{\tau}\sum^{n}_{j=0} w_{j} V^{n-j} - \beta AV^{n} - (1-\beta)  A V^{n-1} \\
& =  \frac{\beta Av}{\Gamma(m+1)}  \frac{1}{\tau^{m}}\sum^{n}_{j=0} b_{j} t_{n-j}^{m}  + \frac{ (1-\beta) Av }{\Gamma(m+1)} \frac{1}{\tau^{m}} \sum^{n-1}_{j=0} s_{j} t_{n+m-1-j}^{m}  \\
& \quad + \beta \frac{1}{\tau^{m}}\sum^{n}_{j=0} b_{j} F^{n-j} + (1-\beta) \frac{1}{\tau^{m}}\sum^{n-1}_{j=0} s_{j} F^{n+m-1-j},\quad \beta\geq3.
\end{split}
\end{equation}
Note that the corrected  ID$m$-WSBDF$k$ method \eqref{IDWSBDF} reduces to the  ID$m$-WSBDF$k$ method \eqref{IDWSBDFad1} when $s_n=0$ for $n\geq km+1$ with $1\leq m \leq  k\leq 7$. In particular, the corrected  ID$m$-WSBDF$k$ method is equivalent to the ID$m$-WSBDF$k$ method when $m=1$, since $t_0=0$ and $F^0=0$, even though the coefficient $s_n$ may not be zero.
{ When comparing \eqref{IDWSBDF} with \eqref{IDWSBDFad1}, we observe that the modification affects only the first few terms.}

\begin{remark}
For the time semidiscrete schemes \eqref{IDWSBDF}, we require $v\in \mathcal{D}(A)$.
However, one can use the schemes \eqref{IDWSBDF} to derive   error estimates when dealing with   nonsmooth data $v\in L^2(\Omega)$; as illustrated in Theorem \ref{Theorem3.3}.
This paper primarily  focuses on the time semidiscrete schemes \eqref{IDWSBDF}, since the spatial discretization is well understood.
In fact, we can choose $v_h=R_hv$ if $v\in \mathcal{D}(A)$ and $v_h=P_hv$ if $v\in L^2(\Omega)$; see \cite{Shi2022Correction,Thomee2006Galerkin,Wang2020Two}.
\end{remark}

{
\begin{remark}
In fact, the analytical framework can be extended to the case where the operator $A$ generates a bounded holomorphic semigroup on a general Banach space, provided that $A$ satisfies the resolvent estimate \eqref{resolvent estimate}.
\end{remark}
}

\section{Solution representation}\label{Sec3}
It is well known that the operator $A$ satisfies the resolvent estimate \cite{Lubich1996Nonsmooth,Thomee2006Galerkin}
\begin{equation}\label{resolvent estimate}
\left\| (z-A)^{-1} \right\| \leq c |z|^{-1} \quad \forall z\in \Sigma_{\theta}
\end{equation}
for all $\theta\in (\pi/2, \pi)$. Here $\Sigma_{\theta}:=\{ z\in \mathbb{C}\backslash \{0\}:|\arg z| < \theta \}$ is a sector of the complex plane $\mathbb{C}$.
Here and below $\left\|\cdot \right\|$ and $\left\|\cdot \right\|_{L^2(\Omega)}$  denote the operator norm \cite[p.\,91]{Thomee2006Galerkin} and usual norm \cite[p.\,2]{Thomee2006Galerkin} in the space $L^2(\Omega)$, respectively.

Applying the Laplace transform in \eqref{RPQ} yields
\begin{equation*}
\widehat{V}(z)= (z - A)^{-1} \left( z^{-1} Av + \widehat{f}(z) \right).
\end{equation*}
By the inverse Laplace transform, we obtain \cite{Thomee2006Galerkin}
\begin{equation}\label{LT}
\begin{split}
V(t)
&=  \frac{1}{2\pi i} \int_{\Gamma_{\theta, \kappa}} e^{zt} (z - A)^{-1}  \left( z^{-1} Av + \widehat{f}(z) \right) dz \\
&=  \frac{1}{2\pi i} \int_{\Gamma_{\theta, \kappa}} e^{zt} (z - A)^{-1}  \left( z^{-1} Av + z^{m} \widehat{F}(z) \right) dz
\end{split}
\end{equation}
with
\begin{equation}\label{Gamma}
\Gamma_{\theta, \kappa}=\{ z\in \mathbb{C}: |z|=\kappa, |\arg z|\leq \theta \} \cup \{ z\in \mathbb{C}: z=re^{\pm i\theta}, r\geq \kappa \} = \Gamma_{\kappa} \cup \Gamma_{\theta}
\end{equation}
and $\theta \in (\pi/2, \pi)$, $\kappa>0$.

Given a sequence $(\kappa^n)_0^\infty$ we denote by
\begin{equation*}
\widetilde{\kappa}(\xi)=\sum_{n=0}^{\infty}\kappa^n \xi^n
\end{equation*}
its generating power series. Let
 \begin{equation}\label{eq3.4}
 \gamma_{l}(\xi)=\sum^{\infty}_{n=0}n^{l} \xi^{n},\quad l \geq 0.
 \end{equation}
Then we have the following result.

\begin{lemma}\label{Lemma2.1}
Let ${\rho^{m}_{b}}$, ${\rho^{m}_{s}}$ and ${\rho_{w}}$ be given in \eqref{CWSBDFkm} and \eqref{CWSBDF}, respectively. Let $\gamma_{m}(\xi)=\sum^{\infty}_{n=0}n^{m} \xi^{n}$ and $F(t)=J^{m}f(t)$ with $1\leq m \leq k\leq 7$ in \eqref{smoothing1}.
Then the discrete solution of \eqref{IDWSBDF} is represented by
\begin{equation*}
V^{n}=\frac{\tau}{2\pi i}\int_{\Gamma^{\tau}_{\theta,\kappa}} e^{zt_n} \left( \frac{{\rho_{w}} (e^{-z\tau})}{\beta + (1-\beta)e^{-z\tau}} - A\right)^{-1} \left(\beta + (1-\beta)e^{-z\tau}\right)^{-1} G(e^{-z\tau}) dz
\end{equation*}
with $\Gamma^{\tau}_{\theta, \kappa}=\{z\in \Gamma_{\theta, \kappa}: |\Im z|\leq \pi / \tau\}$ and
\begin{equation*}
\begin{split}
G(\xi) :=
&     \frac{\beta Av}{\Gamma(m+1)} {\rho^{m}_{b}} (\xi) \gamma_{m}(\xi) \tau^{m}
  +   \frac{(1-\beta)Av}{\Gamma(m+1)} {\rho^{m}_{s}}(\xi) \xi^{1-m} \tau^{m}  \left( \gamma_{m}(\xi) -  \sum^{m-1}_{n=0} n^{m}  \xi^{n} \right) \\
& + \beta {\rho^{m}_{b}}(\xi) \widetilde{F} (\xi) + (1-\beta) {\rho^{m}_{s}}(\xi) \xi^{1-m}\left( \widetilde{F}(\xi) - \sum^{m-1}_{n=0} F^{n}  \xi^{n} \right).
\end{split}
\end{equation*}
\end{lemma}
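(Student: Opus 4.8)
The plan is to derive the representation by applying the discrete (generating-function) Laplace transform to the scheme \eqref{IDWSBDF1}, following the standard Lubich convolution-quadrature technology. First I would multiply the recursion \eqref{IDWSBDF1} through by $\xi^n$ and sum over $n\geq 0$, using $V^0=0$. The left-hand side has two pieces: the term $\frac1\tau\sum_j w_j V^{n-j}$ becomes $\frac1\tau \rho_w(\xi)\,\tau\,\widetilde V(\xi)=\rho_w(\xi)\widetilde V(\xi)$ after recognizing the Cauchy product of $(w_j)$ with $(V^n)$ and using the definition of $\rho_w$ in \eqref{CWSBDF}; the term $\beta A V^n+(1-\beta)AV^{n-1}$ becomes $\bigl(\beta+(1-\beta)\xi\bigr)A\widetilde V(\xi)$ (again $V^{-1}$ does not occur because $V^0=0$, or is set to $0$). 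Hence the left side collapses to $\bigl(\beta+(1-\beta)\xi\bigr)\bigl(\tfrac{\rho_w(\xi)}{\beta+(1-\beta)\xi}-A\bigr)\widetilde V(\xi)$.

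Next I would transform the right-hand side term by term into the quantity $G(\xi)$. For $\sum_n\bigl(\frac1{\tau^m}\sum_{j=0}^n b_j t_{n-j}^m\bigr)\xi^n$, the Cauchy-product structure gives $\rho_b^m(\xi)\,\tau^m\cdot\widetilde{(t_n^m)}(\xi)$; since $t_n^m=(n\tau)^m$ we have $\widetilde{(t_n^m)}(\xi)=\tau^m\gamma_m(\xi)$ with $\gamma_m$ as in \eqref{eq3.4}, producing the first term $\frac{\beta Av}{\Gamma(m+1)}\rho_b^m(\xi)\gamma_m(\xi)\tau^m$. For the shifted terms, the key bookkeeping is the index shift $n+m-1$: $\sum_{n\geq 0}\bigl(\frac1{\tau^m}\sum_{j=0}^{n-1}s_j t_{n+m-1-j}^m\bigr)\xi^n$ — note the sum runs to $n-1$, which is precisely the correction built into \eqref{IDWSBDF1} — equals $\rho_s^m(\xi)\,\xi^{1-m}$ times $\bigl(\widetilde{(t_n^m)}(\xi)-\sum_{n=0}^{m-1}t_n^m\xi^n\bigr)$, after writing $\sum_{j=0}^{n-1}s_j t_{n+m-1-j}^m$ as a shifted convolution and subtracting the low-order terms that the shift would otherwise double-count; substituting $t_n^m=n^m\tau^m$ yields the second term of $G$. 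The same manipulation with $F^n$ in place of $t_n^m$ (and $F^0=0$) gives the remaining two terms. This identifies $\bigl(\beta+(1-\beta)\xi\bigr)\bigl(\tfrac{\rho_w(\xi)}{\beta+(1-\beta)\xi}-A\bigr)\widetilde V(\xi)=G(\xi)$, so $\widetilde V(\xi)=\bigl(\tfrac{\rho_w(\xi)}{\beta+(1-\beta)\xi}-A\bigr)^{-1}\bigl(\beta+(1-\beta)\xi\bigr)^{-1}G(\xi)$.

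Finally I would recover $V^n$ by contour integration: $V^n=\frac1{2\pi i}\oint_{|\xi|=\varrho}\xi^{-n-1}\widetilde V(\xi)\,d\xi$ for suitable small $\varrho$, then substitute $\xi=e^{-z\tau}$, $d\xi=-\tau e^{-z\tau}dz$, which maps the circle to a segment of the line $\Re z=\mathrm{const}$ of height $2\pi/\tau$; deforming this segment to the truncated contour $\Gamma^\tau_{\theta,\kappa}$ is justified because the integrand is analytic in the region swept out (the symbol $\tfrac{\rho_w(e^{-z\tau})}{\beta+(1-\beta)e^{-z\tau}}$ stays in the sector where the resolvent bound \eqref{resolvent estimate} applies, by the $A(\vartheta_k)$-stability recalled in Remark \ref{remarkad2.1}, and $G$ is analytic there), giving exactly the claimed formula with the factor $\tau$ coming from $d\xi=-\tau e^{-z\tau}dz$ and $e^{zt_n}=e^{-n\cdot(-z\tau)}=\xi^{-n}$. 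The main obstacle is purely combinatorial rather than analytic: correctly tracking the shifted summation limits ($\sum_{j=0}^{n-1}$ versus $\sum_{j=0}^{n}$) and the shift $n+m-1$ through the generating-function identities so that the subtracted partial sums $\sum_{n=0}^{m-1}n^m\xi^n$ and $\sum_{n=0}^{m-1}F^n\xi^n$ and the prefactor $\xi^{1-m}$ emerge with the right indices — this is where the correction terms in \eqref{IDWSBDF} are designed to make the generating function clean, and verifying that is the crux of the lemma. The contour-deformation step is routine once one invokes the stability region from Remark \ref{remarkad2.1} and analyticity of $\rho_w,\rho_b^m,\rho_s^m$ near $\xi=1$.
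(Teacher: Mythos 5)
Your proposal is correct and follows essentially the same route as the paper's proof: multiply the scheme by $\xi^n$, sum, identify each term's generating function (with the shifted sums producing the factor $\xi^{1-m}$ and the subtracted partial sums $\sum_{n=0}^{m-1}$), solve for $\widetilde V(\xi)$, and recover $V^n$ by Cauchy's integral formula with the substitution $\xi=e^{-z\tau}$ and contour deformation to $\Gamma^{\tau}_{\theta,\kappa}$. The only quibble is cosmetic: the subtracted terms $\sum_{n=0}^{m-1}F^n\xi^n$ arise because the shifted convolution only reaches indices $n\geq m$, not because of double-counting, but your computation lands in the right place regardless.
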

\begin{proof}
Multiplying \eqref{IDWSBDF} by $\xi^{n}$ and summing over $n$, we obtain
\begin{equation}\label{addeq3.4}
\begin{split}
& \sum^{\infty}_{n=1} {_{w}\partial_{\tau}} V^{n} \xi^{n} - \beta \sum^{\infty}_{n=1} AV^{n} \xi^{n} - (1-\beta)  \sum^{\infty}_{n=1} A V^{n-1} \xi^{n} \\
& =  \frac{\beta Av }{\Gamma(m+1)} \sum^{\infty}_{n=1} {_{b} \partial^{m}_{\tau}} t_{n}^{m} \xi^{n}
 +  \frac{(1-\beta) Av}{\Gamma(m+1)} \frac{1}{\tau^{m}} \sum^{\infty}_{n=1}    \sum^{n-1}_{j=0} s_{j} t_{n+m-1-j}^{m}  \xi^{n} \\
& \quad + \beta \sum^{\infty}_{n=1} {_{b} \partial^{m}_{\tau}} F^{n} \xi^{n} + (1-\beta) \frac{1}{\tau^{m}} \sum^{\infty}_{n=1}  \sum^{n-1}_{j=0} s_{j} F^{n+m-1-j}  \xi^{n}.
\end{split}
\end{equation}
From \eqref{CWSBDF}, and $V^0=0$, there exists
\begin{equation*}
\sum^{\infty}_{n=1} {_{w} \partial_{\tau} V^{n} } \xi^{n} = \sum^{\infty}_{n=1} \frac{1}{\tau}\sum^{n}_{j=0} w_{j} V^{n-j} \xi^{n}
= \frac{1}{\tau} \sum^{\infty}_{j=0} w_{j} \xi^{j} \sum^{\infty}_{n=0} V^{n} \xi^{n} =  {\rho_{w}} (\xi) \widetilde{V} (\xi).
\end{equation*}
Similarly, using  $F^0=F(0)=0$ and $t_0^m=0$, $m\geq 1$, we get
\begin{equation*}
\begin{split}
& \sum^{\infty}_{n=1} AV^{n}\xi^{n}  = A \widetilde{V} (\xi), \quad
\sum^{\infty}_{n=1}   A V^{n-1} \xi^{n}  = \xi A \widetilde{V} (\xi),  \\
& \sum^{\infty}_{n=1} {_{b} \partial^{m}_{\tau}} t_{n}^{m} \xi^{n} =  {\rho^{m}_{b}}(\xi) \gamma_{m}(\xi) \tau^{m}, \quad
\sum^{\infty}_{n=1} {_{b}\partial^{m}_{\tau}} F^{n} \xi^{n} = {\rho^{m}_{b}}(\xi) \widetilde{F} (\xi)
\end{split}
\end{equation*}
with $\gamma_{m}(\xi)=\sum^{\infty}_{n=0}n^{m} \xi^{n}$. On the other hand, one has
\begin{equation*}
\begin{split}
& \frac{1}{\tau^{m}} \sum^{\infty}_{n=1} \sum^{n-1}_{j=0} s_{j}F^{n+m-1-j}  \xi^{n}
 = \frac{1}{\tau^{m}} \sum^{\infty}_{j=0} \sum^{\infty}_{n=j} s_{j}F^{n+m-j}  \xi^{n+1} \\
&  = \frac{1}{\tau^{m}} \sum^{\infty}_{j=0}  s_{j} \xi^{j} \sum^{\infty}_{n=m} F^{n}  \xi^{n+1-m}
  = {\rho^{m}_{s}}(\xi) \xi^{1-m}\left( \widetilde{F}(\xi) - \sum^{m-1}_{n=0} F^{n}  \xi^{n} \right),
\end{split}
\end{equation*}
and
\begin{equation*}
\begin{split}
 \frac{1}{\tau^{m}} \sum^{\infty}_{n=1}   \sum^{n-1}_{j=0} s_{j} t_{n+m-1-j}^{m}  \xi^{n}
&  =  {\rho^{m}_{s}}(\xi) \xi^{1-m} \left( \sum^{\infty}_{n=0} t_{n}^{m}  \xi^{n} - \sum^{m-1}_{n=0} t_{n}^{m} \xi^{n} \right) \\
& = {\rho^{m}_{s}}(\xi) \xi^{1-m} \tau^{m}  \left( \gamma_{m}(\xi) - \sum^{m-1}_{n=0} n^{m}  \xi^{n} \right).
\end{split}
\end{equation*}

Let
\begin{equation*}
\begin{split}
G(\xi) :=
&     \frac{\beta Av}{\Gamma(m+1)} {\rho^{m}_{b}}(\xi) \gamma_{m}(\xi) \tau^{m}
  +   \frac{(1-\beta)Av}{\Gamma(m+1)} {\rho^{m}_{s}}(\xi) \xi^{1-m} \tau^{m}  \left( \gamma_{m}(\xi) -  \sum^{m-1}_{n=0} n^{m}  \xi^{n} \right) \\
& + \beta {\rho^{m}_{b}}(\xi) \widetilde{F} (\xi) + (1-\beta) {\rho^{m}_{s}}(\xi) \xi^{1-m}\left( \widetilde{F}(\xi) - \sum^{m-1}_{n=0} F^{n}  \xi^{n} \right).
\end{split}
\end{equation*}
According to the above equations and \eqref{addeq3.4}, this yields
\begin{equation}\label{eq2.11}
\begin{split}
\widetilde{V}(\xi)
& = \left( {\rho_{w}} (\xi) - (\beta + (1-\beta)\xi)A\right)^{-1}  G(\xi) \\
& = \left( \frac{{\rho_{w}} (\xi)}{\beta + (1-\beta)\xi} - A\right)^{-1} \left(\beta + (1-\beta)\xi\right)^{-1} G(\xi).
\end{split}
\end{equation}
From Cauchy's integral formula, and the change of variables $\xi=e^{-z\tau}$, and Cauchy's theorem, this implies \cite{Jin2017Correction}
\begin{equation}\label{DLT}
V^{n}
=\frac{\tau}{2\pi i}\int_{\Gamma^{\tau}_{\theta,\kappa}} \!\!\!e^{zt_n}\left(\frac{{\rho_{w}} (e^{-z\tau})}{\beta + (1-\beta)e^{-z\tau}}- A\right)^{-1} \left(\beta + (1-\beta)e^{-z\tau}\right)^{-1} G(e^{-z\tau})dz
\end{equation}
with $\Gamma^{\tau}_{\theta, \kappa}=\{z\in \Gamma_{\theta, \kappa}: |\Im z|\leq \pi / \tau\}$.
The proof is completed.
\end{proof}
	
%
\section{Preliminaries: A few technical lemmas}\label{Sec4}
We give some lemmas that will be used.
First, we need a few estimates on $\rho_{b}(\xi)$, $\rho_{s}(\xi)$ and $\rho_{w}(\xi)$ in \eqref{CWSBDF}.

\begin{lemma}\label{Lemma3.1}    
Let ${\rho_{b}}$, $\rho_{s}$ and ${\rho^{m}_{b}}$, ${\rho^{m}_{s}}$ with $1\leq m \leq k \leq 7$ be given in \eqref{CWSBDF} and \eqref{CWSBDFkm}, respectively. Then there exists a positive constant $c$ such that
\begin{equation*}
\begin{split}
& | {\rho_{b}}(e^{-z\tau})| \leq c |z|, \quad
   | {\rho_{b}}(e^{-z\tau})-z| \leq c \tau^{k}|z|^{k+1}, \quad
   | {\rho^{m}_{b}}(e^{-z\tau})-z^{m}| \leq c \tau^{k}|z|^{k+m},\\
& | {\rho_{s}}(e^{-z\tau})- e^{-z\tau}z| \leq c \tau^{k}|z|^{k+1}, \quad
| {\rho_{s}}(e^{-z\tau})| \leq c |z|,
   \\
& 
  | {\rho^{m}_{s}}(e^{-z\tau})- e^{-mz\tau}z^{m}| \leq c \tau^{k}|z|^{k+m}, \quad {\forall} z\in \Gamma^{\tau}_{\theta,\kappa},
\end{split}
\end{equation*}
where  $\theta\in (\pi/2,\pi)$ is sufficiently close to $\pi/2$.
\end{lemma}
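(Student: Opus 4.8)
The plan is to reduce the six inequalities to two basic facts about the polynomial $\delta_k(w):=\sum_{j=1}^{k}w^{j}/j$ — a consistency bound and a boundedness bound for $\rho_{b}(e^{-z\tau})$ and $\rho_{s}(e^{-z\tau})$ — and then to deduce the $m$-fold versions from the elementary identities $\rho^{m}_{b}=(\rho_{b})^{m}$ and $\rho^{m}_{s}=(\rho_{s})^{m}$ (immediate on comparing \eqref{CWSBDF} with \eqref{CWSBDFkm}) together with the factorization $a^{m}-b^{m}=(a-b)\sum_{i=0}^{m-1}a^{i}b^{m-1-i}$. First I would record the generating-function identities $\sum_{j\ge 1}w^{j}/j=-\log(1-w)$ and $\sum_{j\ge 2}w^{j}/(j-1)=-w\log(1-w)$, valid for $|w|<1$. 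Setting $w=1-\xi$ with $\xi=e^{-z\tau}$ and using that $-\log\xi=z\tau$ on the principal branch — legitimate on $\Gamma^{\tau}_{\theta,\kappa}$ because $|\Im(z\tau)|\le\pi$ there — one sees that $\tau\rho_{b}(e^{-z\tau})=\delta_k(1-e^{-z\tau})$ is the degree-$k$ truncation (in powers of $1-\xi$) of $z\tau=\sum_{j\ge1}(1-\xi)^{j}/j$, while $\tau\rho_{s}(e^{-z\tau})$ is the degree-$k$ truncation of $\xi(-\log\xi)=e^{-z\tau}(z\tau)$. Hence $\rho_{b}(e^{-z\tau})-z$ and $\rho_{s}(e^{-z\tau})-e^{-z\tau}z$ equal $\tau^{-1}$ times explicit series tails that begin at order $(1-e^{-z\tau})^{k+1}$.

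The contour geometry is the crux. On $\Gamma^{\tau}_{\theta,\kappa}$ the constraint $|\Im z|\le\pi/\tau$ together with $\theta$ close to $\pi/2$ forces $|z\tau|\le C_{\theta}$ with $C_{\theta}$ independent of $\tau$, and $|e^{-z\tau}|=e^{-\Re(z\tau)}$ is bounded above and below. I would then split the contour at $|z\tau|=\delta$ for a small fixed $\delta$. Where $|z\tau|\le\delta$ one has $|1-e^{-z\tau}|\le q<1$ and $|1-e^{-z\tau}|\le c|z\tau|$, so the tails are $O(|z\tau|^{k+1})$, giving $|\rho_{b}(e^{-z\tau})-z|\le c\tau^{-1}|z\tau|^{k+1}=c\tau^{k}|z|^{k+1}$, likewise $|\rho_{s}(e^{-z\tau})-e^{-z\tau}z|\le c\tau^{k}|z|^{k+1}$, and (since then $\rho_{b}(e^{-z\tau})=z(1+O((z\tau)^{k}))$, similarly for $\rho_{s}$) also $|\rho_{b}(e^{-z\tau})|\le c|z|$ and $|\rho_{s}(e^{-z\tau})|\le c|z|$. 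Where $\delta\le|z\tau|\le C_{\theta}$ the power series is useless, but the crude bound $|\rho_{b}(e^{-z\tau})|\le\tau^{-1}\sum_{j=1}^{k}|1-e^{-z\tau}|^{j}/j\le C\tau^{-1}\le(C/\delta)|z|$ combined with $\tau^{k}|z|^{k+1}=|z\tau|^{k}|z|\ge\delta^{k}|z|$ recovers $|\rho_{b}(e^{-z\tau})|\le c|z|$ and $|\rho_{b}(e^{-z\tau})-z|\le|\rho_{b}(e^{-z\tau})|+|z|\le c\tau^{k}|z|^{k+1}$, and analogously for $\rho_{s}$.

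Finally, for the $m$-fold operators I would apply $a^{m}-b^{m}=(a-b)\sum_{i=0}^{m-1}a^{i}b^{m-1-i}$ with $(a,b)=(\rho_{b}(e^{-z\tau}),z)$ and with $(a,b)=(\rho_{s}(e^{-z\tau}),e^{-z\tau}z)$; inserting the bounds just proved, $|a-b|\le c\tau^{k}|z|^{k+1}$, $|a|\le c|z|$, and $|b|\le c|z|$ (using $|e^{-z\tau}|\le C$), yields $|\rho^{m}_{b}(e^{-z\tau})-z^{m}|\le c\tau^{k}|z|^{k+m}$ and $|\rho^{m}_{s}(e^{-z\tau})-e^{-mz\tau}z^{m}|\le c\tau^{k}|z|^{k+m}$. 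I expect the main obstacle to be exactly the non-uniformity in the second step: since $|1-e^{-z\tau}|$ may approach $2$ at the high-frequency end of the truncated contour, the consistency estimates cannot be extracted from a single Taylor expansion, and one must check that the constants obtained by patching the small-$|z\tau|$ series bound with the crude bounded-$|z\tau|$ estimate are genuinely $\tau$-independent — which is precisely where the hypothesis $\theta\approx\pi/2$ (hence $|z\tau|\lesssim 1$ on $\Gamma^{\tau}_{\theta,\kappa}$) enters.
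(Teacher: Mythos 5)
Your proof is correct, and its overall skeleton coincides with the paper's: establish the $m=1$ consistency and boundedness estimates for $\rho_{b}$ and $\rho_{s}$, then lift to the $m$-fold operators via the telescoping identity $a^{m}-b^{m}=(a-b)\sum_{i}a^{i}b^{m-1-i}$, using that $\rho^{m}_{b}=(\rho_{b})^{m}$ and $\rho^{m}_{s}=(\rho_{s})^{m}$ — that last step is essentially word-for-word what the paper does. Where you differ is in the base case. The paper simply cites Lemma~B.1 of \cite{Jin2017Correction} for the two $\rho_{b}$ estimates, and for $\rho_{s}$ it runs a real-variable argument: Newton's interpolation formula gives $\varphi'(t_{n-1})={_{s}\partial_{\tau}}\varphi^{n}-\tfrac{\tau^{k}}{k(k+1)}\varphi^{(k+1)}(\zeta)$, which applied to $\varphi(t)=e^{t}$ yields $\rho(e^{-\lambda})=e^{-\lambda}\lambda+\mathcal{O}(\lambda^{k+1})$, and the complex estimate then follows from analyticity together with the boundedness $|z\tau|\le\pi/\sin\theta$ on the truncated contour. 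You instead derive both base cases in a unified, self-contained way from the generating-function identities $\sum_{j\ge1}w^{j}/j=-\log(1-w)$ and $\sum_{j\ge2}w^{j}/(j-1)=-w\log(1-w)$, identifying $\tau\rho_{b}$ and $\tau\rho_{s}$ as degree-$k$ truncations of $z\tau$ and $e^{-z\tau}z\tau$ and bounding the explicit tails; your contour-splitting at $|z\tau|=\delta$ to handle the high-frequency end, where $|1-e^{-z\tau}|$ need not be small, is exactly the point the paper leaves implicit in the phrase ``since $|z|\tau\le\pi/\sin\theta$.'' Your route buys a self-contained and uniform treatment (no external citation, and $\rho_{b}$ and $\rho_{s}$ handled by one mechanism); the paper's buys brevity for $\rho_{b}$ and an interpretation of the $\rho_{s}$ estimate as an order-$k$ interpolation error at the shifted node $t_{n-1}$, which is conceptually aligned with how the scheme was designed. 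Both arguments hinge on the same geometric fact — $|z\tau|$ is bounded on $\Gamma^{\tau}_{\theta,\kappa}$ — and both are sound.
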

\begin{proof}
The first two inequalities  of this lemma can be seen in Lemma B.1 of \cite{Jin2017Correction}. Subsequently,
it yields
\begin{equation*}
\begin{split}
\left| {\rho^{m}_{b}}(e^{-z\tau})- z^{m} \right|
& 
 \leq  \left| {\rho_{b}}(e^{-z\tau}) - z  \right|   \cdot \sum_{j=1}^{m} \left| \rho^{m-j}_{b} (e^{-z\tau}) \right|\left|z^{j-1}   \right|
\leq c \tau^{k}|z|^{k+m}.
\end{split}
\end{equation*}

We next estimate  $| {\rho_{s}}(e^{-z\tau})- e^{-z\tau}z| \leq c \tau^{k}|z|^{k+1}$ with $1\leq m \leq k \leq 7$.
According to \eqref{add2.3}, the shifted difference operator ${_{s}\partial_{\tau}} $ with $n\geq k$ can be recast   as
\begin{equation*}
\begin{split}
{_{s}\partial_{\tau}} \varphi^{n}
& = \sum^{k}_{j=1} \frac{\tau^{j-1}}{j}  \partial^{j}_{\tau} \varphi^{n} - \sum^{k}_{j=2} \frac{\tau^{j-1}}{j-1} \partial^{j}_{\tau} \varphi^{n} \quad {\rm with} \quad
\partial_{\tau} \varphi^{n} = \frac{1}{\tau} \left( \varphi^{n} - \varphi^{n-1} \right).
\end{split}
\end{equation*}

We can observe that ${_{s}\partial_{\tau}}$ is an approximation of $\partial_t$ which is accurate of order $k$ at the point $t_{n-1}$.
In fact, by Newton’s interpolation formula we have for $\varphi$ smooth
\begin{equation*}
\varphi(t) = \varphi^{n} + \sum_{j=1}^{k} \frac{(t-t_{n}) \cdots (t-t_{n-j+1})}{\Gamma(j+1)} \partial^{j}_{\tau} \varphi^{n}
+ R_{k}(\varphi;t)
\end{equation*}
with
\begin{equation*}
R_{k}(\varphi;t) = \frac{(t-t_{n}) \cdots (t-t_{n-k})}{\Gamma(k+2)} \varphi^{(k+1)}(\zeta),  \quad \zeta\in[t_{n-k},t_{n}].
\end{equation*}
After differentiation and taking  $t = t_{n-1}$ this shows
\begin{equation*}
\begin{split}
\varphi'(t_{n-1})
& = \partial_{\tau} \varphi^{n} - \sum^{k}_{j=2} \frac{\tau^{j-1}}{(j-1)j} \partial^{j}_{\tau} \varphi^{n} - \frac{\tau^{k}}{k(k+1)} \varphi^{(k+1)}(\zeta) \\
& = {_{s}\partial_{\tau}} \varphi^{n} - \frac{\tau^{k}}{k(k+1)} \varphi^{(k+1)}(\zeta).
\end{split}
\end{equation*}

Let  the characteristic polynomial $ \rho(\xi) = \sum^{k}_{j=0} \overline{s}_{j} \xi^{j}$, where the $\overline{s}_{j}$ are the coefficients in \eqref{CWSBDF}.  Denoting the translation operator $T_{-\tau} \varphi(t) = \varphi(t-\tau)$, it yields
\begin{equation*}
\varphi'(t_{n-1}) =  \tau^{-1} \rho(T_{-\tau}) \varphi^{n} + \mathcal{O}(\tau^{k}).
\end{equation*}
Applying this to $\varphi(t) = e^{t}$ and replacing $\tau$ by $\lambda$, we obtain
\begin{equation*}
\rho(e^{-\lambda}) = e^{-\lambda} \lambda + \mathcal{O}(\lambda^{k+1}).
\end{equation*}
Choosing $\lambda=z\tau$, it implies that
 $| {\rho_{s}}(e^{-z\tau})- e^{-z\tau}z| \leq c \tau^{k}|z|^{k+1}$.
 In particular, it yields $|{\rho_{s}}(e^{-z\tau})| \leq c |z|$, since $|z|\tau \leq \pi/\sin\theta $. Furthermore, we have
\begin{equation*}
\begin{split}
& \left| {\rho^{m}_{s}}(e^{-z\tau})- e^{-mz\tau}z^{m} \right| \\
& \leq  \left| {\rho_{s}}(e^{-z\tau}) - e^{-z\tau}z  \right|  \cdot \sum_{j=1}^{m} \left| \rho^{m-j}_{s} (e^{-z\tau})\right| \left| e^{-(j-1)z\tau}  z^{j-1}   \right|
\leq c \tau^{k}|z|^{k+m}.
\end{split}
\end{equation*}
The proof is completed.
\end{proof}

\begin{lemma}\label{Lemma3.2}    
Let $\rho_{w}$ with $1 \leq m \leq k \leq 7$ be given in \eqref{CWSBDF}. Then there exist the positive constants $c_{1},c_{2}$, $c$, $\varepsilon$, and $\theta \in (\pi/2, \theta_{\varepsilon})$ with  $\theta_{\varepsilon} \in (\pi/2, \pi)$ such that
\begin{equation*}
\begin{split}
&\left| \left(\beta + (1- \beta)e^{-z\tau}\right) \right| \geq c, \quad \beta\geq 3, \\
& c_{1}|z| \leq | {\rho_{w}}(e^{-z\tau})| \leq c_{2}|z|,
\quad \frac{{\rho_{w}}(e^{-z\tau})}{ \beta +(1- \beta )e^{-z\tau}} \in \Sigma_{\pi- \vartheta_{k} +\varepsilon}, \\
& | {\rho_{w}}(e^{-z\tau})- ( \beta +(1- \beta )e^{-z\tau} ) z | \leq c \tau^{k}|z|^{k+1}, \quad
{\forall} z\in \Gamma^{\tau}_{\theta,\kappa},
\end{split}
\end{equation*}
where  $\theta\in (\pi/2,\pi)$ is sufficiently close to $\pi/2$.
\end{lemma}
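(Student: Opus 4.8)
The plan is to prove the six displayed inequalities in order of increasing difficulty, writing $\sigma_w(\xi):=\beta+(1-\beta)\xi$ throughout, so that $\sigma_w(e^{-z\tau})=\beta\cdot1+(1-\beta)e^{-z\tau}$ and $\sigma_w(e^{-z\tau})\,z=\beta z+(1-\beta)e^{-z\tau}z$. Since $\rho_w=\beta\rho_b+(1-\beta)\rho_s$, the upper bound $|\rho_w(e^{-z\tau})|\le c_2|z|$ and the consistency estimate are immediate from the triangle inequality and Lemma~\ref{Lemma3.1}; indeed
\[
|\rho_w(e^{-z\tau})-\sigma_w(e^{-z\tau})z|\le\beta|\rho_b(e^{-z\tau})-z|+(\beta-1)|\rho_s(e^{-z\tau})-e^{-z\tau}z|\le c\tau^{k}|z|^{k+1},
\]
and likewise $|\rho_w(e^{-z\tau})|\le\beta|\rho_b(e^{-z\tau})|+(\beta-1)|\rho_s(e^{-z\tau})|\le c_2|z|$. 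I would dispose of these two first.

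Next I would prove $|\sigma_w(e^{-z\tau})|\ge c$ for $\beta\ge3$. The polynomial $\sigma_w$ has the single root $\xi_\ast=\beta/(\beta-1)>1$ and factors as $\sigma_w(\xi)=-(\beta-1)(\xi-\xi_\ast)$, so it suffices to keep $e^{-z\tau}$ at positive distance from $\xi_\ast$. On the rays $z=re^{\pm i\theta}\in\Gamma_\theta$ one has $\tau r\sin\theta\le\pi$ on $\Gamma^{\tau}_{\theta,\kappa}$, whence $|e^{-z\tau}|=e^{-\tau r\cos\theta}\le e^{\pi|\cot\theta|}$, which tends to $1$ as $\theta\to(\pi/2)^{+}$; hence for $\theta$ sufficiently close to $\pi/2$ we get $|e^{-z\tau}-\xi_\ast|\ge\xi_\ast-e^{\pi|\cot\theta|}>0$ uniformly. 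On the arc $\Gamma_\kappa$ one has $|e^{-z\tau}-1|\le\kappa\tau\,e^{\kappa\tau}$, so $e^{-z\tau}$ stays near $1$ and away from $\xi_\ast$ once $\kappa\tau$ is small. Multiplying by $\beta-1$ gives the bound.

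The core of the lemma is the lower bound $c_1|z|\le|\rho_w(e^{-z\tau})|$ together with the sectorial inclusion $\rho_w(e^{-z\tau})/\sigma_w(e^{-z\tau})\in\Sigma_{\pi-\vartheta_k+\varepsilon}$, which I would treat by splitting $\Gamma^{\tau}_{\theta,\kappa}$ according to the size of $|z\tau|$. For $|z\tau|\le\varepsilon_0$ with $\varepsilon_0$ small, the consistency estimate together with $|\sigma_w(e^{-z\tau})|\ge c$ yields $|\rho_w(e^{-z\tau})|\ge|\sigma_w(e^{-z\tau})||z|-c\tau^{k}|z|^{k+1}\ge c|z|-c(|z\tau|)^{k}|z|\ge\frac{c}{2}|z|$, while $\rho_w(e^{-z\tau})/\sigma_w(e^{-z\tau})=z+\mathcal{O}\big((|z\tau|)^{k}|z|\big)$ lies in an arbitrarily small neighbourhood of the sector $\{w:|\arg w|\le\theta\}$; since $\vartheta_k\le\pi/2$ (cf.\ Remark~\ref{remarkad2.1}), for $\theta$ close to $\pi/2$ and $\varepsilon_0$ small this neighbourhood is contained in $\Sigma_{\pi-\vartheta_k+\varepsilon}$. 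For $\varepsilon_0\le|z\tau|\le\pi/\sin\theta$ the variable $\xi=e^{-z\tau}$ ranges over a fixed compact subset of $\{|\xi|>1\}$, independent of $\tau$. Because the WSBDF$k$ method ($\beta\ge3$, $k\le7$) is $A(\vartheta_k)$-stable with $\vartheta_k>0$, its characteristic polynomial $\rho_w$ has no zero outside the closed unit disc, so $\tau\rho_w(\xi)$ and $\sigma_w(\xi)$ are bounded below in modulus by a positive constant on this compact set; using $\tau|z|=|z\tau|\le\pi/\sin\theta$ this gives $|\rho_w(e^{-z\tau})|=\tau^{-1}|\tau\rho_w(\xi)|\ge c'\tau^{-1}\ge c_1|z|$. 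Moreover, the root-locus characterisation of $A(\vartheta_k)$-stability gives $\tau\rho_w(\xi)/\sigma_w(\xi)\in\overline{\Sigma_{\pi-\vartheta_k}}$ whenever $|\xi|>1$ and $\sigma_w(\xi)\ne0$; being finite and nonzero on our compact set, this value lies in the open sector $\Sigma_{\pi-\vartheta_k+\varepsilon}$, and multiplication by $\tau^{-1}>0$ leaves the argument unchanged, so $\rho_w(e^{-z\tau})/\sigma_w(e^{-z\tau})\in\Sigma_{\pi-\vartheta_k+\varepsilon}$ as well. Combining the two regimes and recalling the already-established bounds on $|\sigma_w(e^{-z\tau})|$ completes the proof.

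I expect the main obstacle to be the second regime: passing from the purely \emph{algebraic} $A(\vartheta_k)$-stability of the scheme (a statement about $\tau\rho_w(\xi)/\sigma_w(\xi)$ on $\{|\xi|>1\}$) to an estimate that is \emph{uniform in $\tau$} along the perturbed contour $\{e^{-z\tau}:z\in\Gamma^{\tau}_{\theta,\kappa}\}$, together with the accompanying bookkeeping of constants: one must first fix $\varepsilon$ small (below $\vartheta_k$), then choose $\theta>\pi/2$ close enough to $\pi/2$, then $\varepsilon_0$ and $\kappa$ small, so that the near-origin part of the contour is swept into $\Sigma_{\pi-\vartheta_k+\varepsilon}$ while $e^{-z\tau}$ simultaneously avoids the zero $\xi_\ast=\beta/(\beta-1)$ of $\sigma_w$ — this is precisely where the hypothesis $\beta\ge3$ and the inequality $\vartheta_k\le\pi/2$ are used.
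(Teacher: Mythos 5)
Most of your proposal is sound and runs parallel to the paper's proof: the consistency estimate and the upper bound $|\rho_w(e^{-z\tau})|\le c_2|z|$ via the triangle inequality and Lemma~\ref{Lemma3.1} are exactly what the paper does (the paper gets the two-sided bound instead from the fact that $\tau\rho_w(\xi)/(1-\xi)$ has no zero in a neighbourhood of the unit circle); your treatment of $|\beta+(1-\beta)e^{-z\tau}|\ge c$ by locating the single root $\xi_\ast=\beta/(\beta-1)>1$ is a clean equivalent of the paper's trigonometric computation; and your small-$|z\tau|$ regime for both the lower bound and the sector inclusion is correct.

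The genuine gap is in your second regime, where you invert the direction of the stability characterisation. In the generating-function variable $\xi=e^{-z\tau}$ (the reciprocal of the usual root variable $\zeta$), zero-stability places the spurious zeros of $\rho_w$ \emph{outside} the closed unit disc, not inside it — for BDF2, for instance, $\rho_b$ vanishes at $\xi=3$ — and $A(\vartheta_k)$-stability asserts $\tau\rho_w(\xi)/(\beta+(1-\beta)\xi)\in\overline{\Sigma_{\pi-\vartheta_k}}$ for $|\xi|\le1$, \emph{not} for $|\xi|>1$: backward Euler has $\delta(\xi)=1-\xi$, and $\delta(2)=-1\notin\Sigma_{\pi/2}$. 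Since for $\theta>\pi/2$ the points $e^{-z\tau}$ on the rays of $\Gamma^{\tau}_{\theta,\kappa}$ lie slightly \emph{outside} the unit circle, your compact set sits precisely in the region that the stability hypothesis does not directly control, so the step ``the root-locus characterisation gives $\tau\rho_w(\xi)/\sigma_w(\xi)\in\overline{\Sigma_{\pi-\vartheta_k}}$ whenever $|\xi|>1$'' fails as stated; this is exactly the obstacle you flag at the end, but the mechanism you propose does not resolve it. The repair is the one the paper delegates to Lemma B.1 of Jin, Li and Zhou (and which the paper itself states correctly: the inclusion from stability is for $|\xi|\le1$, and ``for $|\xi|>1$ similar arguments can be performed''): the inclusion and the lower bound hold on the unit circle away from $\xi=1$, with a strict margin, and since your compact set lies within distance $e^{\pi|\cot\theta|}-1\to0$ of the unit circle as $\theta\to(\pi/2)^{+}$, continuity and compactness transfer both estimates at the cost of the $\varepsilon$ already built into $\Sigma_{\pi-\vartheta_k+\varepsilon}$. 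Your lower bound $|\tau\rho_w(\xi)|\ge c'$ on the compact set survives for the same reason — the zeros of $\rho_w$ other than $\xi=1$ are at a fixed positive distance from the unit circle — but not for the reason you give.
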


\begin{proof}
Since $z\tau = r\tau e^{ i\theta}  = r \tau \cos \theta + ir \tau \sin \theta$, it yields
\begin{equation*}
\left| \beta + (1 \! - \! \beta)e^{-z\tau} \right|
\!= \!\left| \beta - (\beta-1) e^{-r \tau \cos \theta}  \cos(r \tau \sin \theta )\!+ \!i (\beta-1) e^{-r \tau \cos \theta}  \sin(r \tau \sin \theta ) \right|.
\end{equation*}
Let $\theta$ is sufficiently close to $\frac{\pi}{2}$ such that $e^{- r \tau \cos \theta} \leq \frac{\beta-\frac{1}{3}}{\beta-1}$. Then we have
\begin{equation*}
\begin{split}
& \left( \beta - (\beta-1) e^{-r \tau \cos \theta}  \cos(r \tau \sin \theta ) \right)^{2} + \left( (\beta-1) e^{-r \tau \cos \theta}  \sin(r \tau \sin \theta ) \right)^{2} \\
& =  \beta^{2} + (\beta-1)^{2} e^{-2 r \tau \cos \theta}  - 2 \beta (\beta-1)  e^{- r \tau \cos \theta}  \cos(r \tau \sin \theta ) \\
& \geq \beta^{2} + (\beta-1)^{2} e^{-2 r \tau \cos \theta}  - 2 \beta (\beta-1)  e^{- r \tau \cos \theta}
= \left( \beta - (\beta-1)  e^{- r \tau \cos \theta} \right)^{2} \geq \frac{1}{9},
\end{split}
\end{equation*}
which implies that $ \left| \beta + (1- \beta)e^{-z\tau} \right| \geq \frac{1}{3}$.

From \eqref{CWSBDF}, we know that the function $\tau \rho_{w}(\xi)/(1 - \xi)$ has no zero in a neighborhood $\mathcal{N}$ of the
unit circle. Since  $ \theta$ is sufficiently close to $\pi/2$ and $e^{-z\tau}$ lies in
the neighborhood $\mathcal{N}$, it yields
\begin{equation*}
c_{1} \leq \frac{| \tau \rho_{w} (e^{-z\tau}) |}{| 1 - e^{-z\tau} |} =  \frac{ | \rho_{w} (e^{-z\tau}) |}{ | (1 - e^{-z\tau})/\tau |}  \leq c_{2}.
\end{equation*}
Since $\widetilde{c}_{1} |z\tau| \leq | 1 - e^{-z\tau} |  \leq \widetilde{c}_{2} |z\tau|$, it leads to $c_{1}|z|\leq | {\rho_{w}}(e^{-z\tau})| \leq c_{2}|z|$.

Let ${\delta}_{\tau} (\xi) := \frac{{\rho_{w}}(\xi)}{ \beta +(1- \beta ) \xi} $ with $ \left| \beta + (1- \beta)\xi \right| \geq \frac{1}{3}$.
When $|\xi| \leq 1$ and $\xi \neq 0$, we have ${\delta}_{\tau} (\xi) \in \Sigma_{\pi- \vartheta_{k} }$ for the $A(\vartheta_{k})$ stable WSBDFk, see Remark \ref{remarkad2.1}. For $|\xi|>1$,  the similar arguments can be performed as  Lemma B.1 of \cite{Jin2017Correction}, we
omit it here.

According to \eqref{CWSBDF} and  Lemma \ref{Lemma3.1}, there exists
\begin{equation*}
\begin{split}
& \left| {\rho_{w}}(e^{-z\tau})  - (\beta+(1-\beta)e^{-z\tau} ) z \right| \\
& = \left| \beta {\rho_{b}}(e^{-z\tau})  + (1-\beta){\rho_{s}}(e^{-z\tau})- \beta z - (1-\beta)e^{-z\tau} z \right|  \\
& \leq \beta \left|  {\rho_{b}}(e^{-z\tau}) - z \right| + (\beta-1) \left| {\rho_{s}}(e^{-z\tau}) - e^{-z\tau} z \right|  \leq c \tau^{k}|z|^{k+1}.
\end{split}
\end{equation*}
The proof is completed.
\end{proof}

\begin{lemma}\label{Lemma3.3}
Let $\gamma_{l}(\xi)=\sum^{\infty}_{n=0}n^{l} \xi^{n}$ with $l=0, 1, 2, \ldots, 2k$. Then there exists a positive constant $c$ such that
\begin{equation*}
\left| \frac{\gamma_{l}(e^{-z\tau})}{\Gamma(l+1)} \tau^{l+1} - \frac{1}{z^{l+1}} \right| \leq
\left\{ \begin{split}
c \tau^{l+1},    \qquad & l=0,1,3,\ldots,2k-1,\\
c \tau^{l+2}|z| , \quad & l=2,4,\ldots,2k.
\end{split}\right.
\end{equation*}
\end{lemma}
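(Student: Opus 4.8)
The plan is to localize $\gamma_l$ near $\xi=1$ via the classical Bernoulli expansion of the geometric series. Writing $w=z\tau$, recall that
\begin{equation*}
\frac{1}{1-e^{-w}}=\frac{1}{w}+\frac{1}{2}+h(w),\qquad h(w):=\sum_{j\ge1}\frac{B_{2j}}{(2j)!}\,w^{2j-1},
\end{equation*}
where $h$ is an \emph{odd} function; since the only singularity of $(1-e^{-w})^{-1}$ in the strip $\{\,|\Im w|<2\pi\,\}$ is the simple pole at $w=0$ removed by $1/w$, $h$ is analytic on that whole strip. Differentiating the geometric series $\sum_{n\ge0}e^{-nw}=(1-e^{-w})^{-1}$ term by term for $\Re w>0$ and then continuing analytically (the rational function $\xi\mapsto\gamma_l(\xi)$ agrees with the resulting expression where both are defined), one obtains for every $l\ge1$
\begin{equation*}
\gamma_l(e^{-w})=(-\partial_w)^l\,\frac{1}{1-e^{-w}}=\frac{l!}{w^{l+1}}+(-1)^l\,h^{(l)}(w).
\end{equation*}
Since $\Gamma(l+1)=l!$, substituting $w=z\tau$ turns this into the exact identity
\begin{equation*}
\frac{\gamma_l(e^{-z\tau})}{\Gamma(l+1)}\,\tau^{l+1}-\frac{1}{z^{l+1}}=\frac{(-1)^l}{l!}\,\tau^{l+1}\,h^{(l)}(z\tau),\qquad l\ge1,
\end{equation*}
while for $l=0$ one has directly $\tau\,\gamma_0(e^{-z\tau})-z^{-1}=\frac{\tau}{2}+\tau\,h(z\tau)$.

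Second, I would check that for $z\in\Gamma^{\tau}_{\theta,\kappa}$ the argument $w=z\tau$ ranges over a bounded set $K$ that, uniformly in $\tau$, is a compact subset of the strip $\{\,|\Im w|<2\pi\,\}$: the truncation defining $\Gamma^{\tau}_{\theta,\kappa}$ gives $|\Im(z\tau)|\le\pi$, while $|z\tau|\le\pi/\sin\theta$ on the two rays and $|z\tau|=\kappa\tau$ on the inner arc (bounded for $\tau$ small). Consequently each derivative $h^{(l)}$, being analytic on that strip, satisfies $|h^{(l)}(w)|\le c_l$ for all $w\in K$.

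The dichotomy in the statement is then exactly the parity of $h$: being odd, $h^{(l)}$ is even when $l$ is odd and odd when $l$ is even. For $l=0$ and for odd $l\ge1$, I would simply bound $|h^{(l)}(z\tau)|\le c_l$ on $K$ (when $l=0$ the additional term $\tau/2$ is of the same order), which gives the bound $c\,\tau^{l+1}$. For even $l\ge2$, oddness forces $h^{(l)}(0)=0$, so $h^{(l)}(w)/w$ extends analytically to the strip and is likewise bounded on $K$; hence $|h^{(l)}(z\tau)|\le c_l\,|z\tau|=c_l\,\tau\,|z|$, and the exact identity above yields $c\,\tau^{l+2}|z|$.

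The only point I expect to require genuine care is the second step: showing that $\{\,z\tau:z\in\Gamma^{\tau}_{\theta,\kappa}\,\}$ is contained in one compact subset of $\{\,|\Im w|<2\pi\,\}$ independent of $\tau$ (this is where the hypotheses ``$\theta$ sufficiently close to $\pi/2$'' and the customary smallness of $\tau$ are used), together with the justification of the termwise differentiation and analytic continuation behind the Bernoulli identity. Once those are settled, the parity bookkeeping and the extra power of $\tau$ gained for even $l$ follow immediately, with no further computation.
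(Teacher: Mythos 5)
Your proof is correct, and it is genuinely more self-contained than the paper's. The paper disposes of this lemma in three lines: it sets $\overline{\gamma}_{l}(\xi)=\sum_{n\geq 1}n^{l}\xi^{n}$, quotes the identical estimate for $\overline{\gamma}_{l}$ from Lemma 3.2 of the cited work of Shi and Chen (SIAM J. Numer. Anal., 2023), notes that $\gamma_{l}=\overline{\gamma}_{l}$ for $l\geq 1$, and absorbs the extra $n=0$ term (which contributes $\tau$) into the bound $c\tau$ for $l=0$. You instead prove the estimate from scratch via the Bernoulli expansion $\frac{1}{1-e^{-w}}=\frac{1}{w}+\frac12+h(w)$ with $h$ odd and analytic on $\lvert\Im w\rvert<2\pi$, obtaining the exact identity $\frac{\gamma_l(e^{-z\tau})}{l!}\tau^{l+1}-z^{-l-1}=\frac{(-1)^l}{l!}\tau^{l+1}h^{(l)}(z\tau)$ for $l\geq1$ and reading the case split off the parity of $h^{(l)}$: boundedness of $h^{(l)}$ on the compact image of $\Gamma^{\tau}_{\theta,\kappa}$ under $z\mapsto z\tau$ gives $c\tau^{l+1}$, while $h^{(l)}(0)=0$ for even $l$ yields the extra factor $\tau\lvert z\rvert$. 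Your treatment of the two delicate points — the analytic continuation of the divergent series to the part of the contour with $\Re z<0$ (via the rational function $\gamma_l$), and the uniform-in-$\tau$ containment of $\{z\tau\}$ in a compact subset of the strip using $\lvert\Im(z\tau)\rvert\leq\pi$ and $\lvert z\tau\rvert\leq\pi/\sin\theta$ — is sound, and your handling of $l=0$ (where the $\tau/2$ term, not $h$, dictates the order) is also right. What your route buys is a transparent explanation of \emph{why} the even/odd dichotomy with the gain of one power of $\tau$ occurs, which the paper's citation-based proof leaves opaque; what it costs is the extra bookkeeping that the paper avoids by outsourcing to the earlier reference.
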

\begin{proof}
Let $\overline{\gamma}_{l}(\xi)=\sum^{\infty}_{n=1}n^{l} \xi^{n}$ with $l=0, 1, 2, \ldots, 2k$.
Using Lemma 3.2 of \cite{Shi2023Highorder}, there exists
\begin{equation*}
\left| \frac{\overline{\gamma}_{l}(e^{-z\tau})}{\Gamma(l+1)} \tau^{l+1} - \frac{1}{z^{l+1}} \right| \leq
\left\{ \begin{split}
c \tau^{l+1},    \qquad & l=0,1,3,\ldots,2k-1,\\
c \tau^{l+2}|z| , \quad & l=2,4,\ldots,2k.
\end{split}\right.
\end{equation*}
Thus the desired is obtained for $l\geq 1$.
For $l=0$, we have
\begin{equation*}
\left| \gamma_{0}(e^{-z\tau}) \tau - \frac{1}{z} \right|
= \left| \tau + \overline{\gamma}_{0}(e^{-z\tau}) \tau - \frac{1}{z} \right|
\leq \left| \tau \right| + \left|\overline{\gamma}_{0}(e^{-z\tau}) \tau - \frac{1}{z} \right| \leq
c \tau.
\end{equation*}
The proof is completed.
\end{proof}

\begin{lemma}\label{Lemma3.4}
Let ${\rho^{m}_{b}}(\xi)$ be given in \eqref{CWSBDFkm} with  $1\leq m \leq k \leq 7$ and $\gamma_{l}(\xi)=\sum^{\infty}_{n=0}n^{l} \xi^{n}$  with $0\leq l< k+m$. Then there exists a positive constant $c$ such that
\begin{equation*}
\left| {\rho^{m}_{b}}(e^{-z\tau}) \frac{\gamma_{l}(e^{-z\tau})}{\Gamma(l+1)} \tau^{l+1} -\frac{z^{m}}{z^{l+1}}  \right|
\leq \left\{ \begin{split}
c \tau^{l+1} \left| z \right|^{m} + c \tau^{k}|z|^{k+m-l-1},  \qquad & l=0, 1,3,\ldots,\\
c \tau^{l+2} \left| z \right|^{m+1} + c \tau^{k}|z|^{k+m-l-1}, \quad & l=2,4,\ldots.
\end{split}\right.
\end{equation*}
\end{lemma}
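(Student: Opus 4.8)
The plan is to split the quantity to be estimated by inserting the ``exact'' symbol $z^m$ for $\rho^m_b(e^{-z\tau})$, so that
\begin{equation*}
\rho^m_b(e^{-z\tau})\frac{\gamma_l(e^{-z\tau})}{\Gamma(l+1)}\tau^{l+1}-\frac{z^m}{z^{l+1}}
= \rho^m_b(e^{-z\tau})\left(\frac{\gamma_l(e^{-z\tau})}{\Gamma(l+1)}\tau^{l+1}-\frac{1}{z^{l+1}}\right)
+ \bigl(\rho^m_b(e^{-z\tau})-z^m\bigr)\frac{1}{z^{l+1}}.
\end{equation*}
For the first term I would use $|\rho^m_b(e^{-z\tau})|\le c|z|^m$ — which follows by multiplying the bound $|\rho_b(e^{-z\tau})|\le c|z|$ from Lemma~\ref{Lemma3.1} $m$ times, since $\rho^m_b=(\tau\rho_b)^m/\tau^m$ — together with the estimate on $\bigl|\gamma_l(e^{-z\tau})\tau^{l+1}/\Gamma(l+1)-z^{-(l+1)}\bigr|$ from Lemma~\ref{Lemma3.3}; this produces $c\tau^{l+1}|z|^m$ when $l$ is odd or zero, and $c\tau^{l+2}|z|^{m+1}$ when $l$ is even. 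For the second term I would invoke $|\rho^m_b(e^{-z\tau})-z^m|\le c\tau^k|z|^{k+m}$, again from Lemma~\ref{Lemma3.1}, and divide by $|z|^{l+1}$ to get $c\tau^k|z|^{k+m-l-1}$. Adding the two contributions gives exactly the stated bound in each parity case.

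The only genuinely delicate point is to make sure $|\rho^m_b(e^{-z\tau})|\le c|z|^m$ holds uniformly on $\Gamma^\tau_{\theta,\kappa}$; this is where the restriction $|z|\tau\le\pi/\sin\theta$ (equivalently $|\Im z|\le\pi/\tau$) is used, exactly as in the proof of $|\rho_s(e^{-z\tau})|\le c|z|$ in Lemma~\ref{Lemma3.1}. Everything else is routine bookkeeping: tracking which of the two cases of Lemma~\ref{Lemma3.3} applies (even versus odd/zero $l$) and observing that the condition $0\le l<k+m$ guarantees the exponent $k+m-l-1$ is nonnegative so the second term is harmless. I do not expect any obstacle beyond carefully carrying the two parity cases through the triangle inequality.
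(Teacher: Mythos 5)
Your decomposition into $I_1=\rho^m_b(e^{-z\tau})\bigl(\gamma_l(e^{-z\tau})\tau^{l+1}/\Gamma(l+1)-z^{-(l+1)}\bigr)$ and $I_2=(\rho^m_b(e^{-z\tau})-z^m)z^{-(l+1)}$, bounded via Lemmas \ref{Lemma3.1} and \ref{Lemma3.3} respectively, is exactly the paper's proof. The argument is correct, including the observation that $|\rho^m_b(e^{-z\tau})|=|\rho_b(e^{-z\tau})|^m\le c|z|^m$ and the tracking of the two parity cases.
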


\begin{proof}
Let
\begin{equation*}
{\rho^{m}_{b}}(e^{-z\tau}) \frac{\gamma_{l}(e^{-z\tau})}{\Gamma(l+1)} \tau^{l+1} -\frac{z^{m}}{z^{l+1}} = I_{1} + I_{2}
\end{equation*}
with
\begin{equation*}
I_{1}= {\rho^{m}_{b}}(e^{-z\tau}) \left( \frac{\gamma_{l}(e^{-z\tau})}{\Gamma(l+1)}  \tau^{l+1} - \frac{1}{z^{l+1}} \right) \quad {\rm and} \quad
I_{2}=  \frac{{\rho^{m}_{b}}(e^{-z\tau})-z^m}{z^{l+1}}.
\end{equation*}
According to Lemmas \ref{Lemma3.1} and \ref{Lemma3.3}, this leads to
\begin{equation*}
\left|I_{1} \right|\leq \left\{ \begin{split}
c \tau^{l+1} \left| z \right|^{m} , \qquad & l=0,1,3,\cdots,2k-1,\\
c \tau^{l+2} \left| z \right|^{m+1}, \quad & l=2,4,\cdots,2k;
\end{split}\right.
\end{equation*}
and
\begin{equation*}
\left|I_{2} \right| \leq c \tau^{k}|z|^{k+m-l-1}.
\end{equation*}
The proof is completed.
\end{proof}

\begin{lemma}\label{Lemma3.5}
Let ${\rho^{m}_{s}}(\xi)$ be given in \eqref{CWSBDFkm}   with  $1\leq m \leq k \leq 7$ and $\gamma_{l}(\xi)=\sum^{\infty}_{n=0}n^{l} \xi^{n}$ with $0\leq l< k+m$. Then there exists a positive constant $c$ such that
\begin{equation*}
\begin{split}
& \left| {\rho^{m}_{s}}(e^{-z\tau})  \frac{\gamma_{l}(e^{-z\tau})}{\Gamma(l+1)} \tau^{l+1}  - e^{-mz\tau} \frac{z^{m}}{z^{l+1}} \right| \\
&\quad \leq \left\{ \begin{split}
c \tau^{l+1} \left| z \right|^{m} + c \tau^{k}|z|^{k+m-l-1},  \qquad & l=0,1,3,\ldots,\\
c \tau^{l+2} \left| z \right|^{m+1} + c \tau^{k}|z|^{k+m-l-1}, \quad & l=2,4,\ldots.
\end{split}\right.
\end{split}
\end{equation*}
\end{lemma}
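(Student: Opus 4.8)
The plan is to mimic the proof of Lemma \ref{Lemma3.4} verbatim, replacing $\rho^{m}_{b}$ by $\rho^{m}_{s}$ and $z^m$ by $e^{-mz\tau}z^m$, and then splitting the target quantity into two pieces whose estimates are already available from Lemmas \ref{Lemma3.1} and \ref{Lemma3.3}. Concretely, I would write
\begin{equation*}
{\rho^{m}_{s}}(e^{-z\tau})\,\frac{\gamma_{l}(e^{-z\tau})}{\Gamma(l+1)}\,\tau^{l+1} - e^{-mz\tau}\frac{z^{m}}{z^{l+1}} = I_{1}+I_{2}
\end{equation*}
with
\begin{equation*}
I_{1}={\rho^{m}_{s}}(e^{-z\tau})\left(\frac{\gamma_{l}(e^{-z\tau})}{\Gamma(l+1)}\tau^{l+1}-\frac{1}{z^{l+1}}\right),
\qquad
I_{2}=\frac{{\rho^{m}_{s}}(e^{-z\tau})-e^{-mz\tau}z^{m}}{z^{l+1}}.
\end{equation*}

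First I would bound $I_{1}$. From \eqref{CWSBDFkm} one has ${\rho^{m}_{s}}(\xi)=({\rho_{s}}(\xi))^{m}$, so the estimate $|{\rho_{s}}(e^{-z\tau})|\le c|z|$ of Lemma \ref{Lemma3.1} gives $|{\rho^{m}_{s}}(e^{-z\tau})|\le c|z|^{m}$ on $\Gamma^{\tau}_{\theta,\kappa}$. Multiplying this by the bound of Lemma \ref{Lemma3.3} for $|\gamma_{l}(e^{-z\tau})\tau^{l+1}/\Gamma(l+1)-z^{-(l+1)}|$ yields
\begin{equation*}
|I_{1}|\le
\begin{cases}
c\,\tau^{l+1}|z|^{m}, & l=0,1,3,\ldots,2k-1,\\
c\,\tau^{l+2}|z|^{m+1}, & l=2,4,\ldots,2k,
\end{cases}
\end{equation*}
which is exactly the first half of the asserted bound.

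Next I would bound $I_{2}$ by applying directly the last inequality of Lemma \ref{Lemma3.1}, namely $|{\rho^{m}_{s}}(e^{-z\tau})-e^{-mz\tau}z^{m}|\le c\tau^{k}|z|^{k+m}$, so that $|I_{2}|\le c\tau^{k}|z|^{k+m}/|z|^{l+1}=c\tau^{k}|z|^{k+m-l-1}$, which is the remaining half. Adding the two contributions completes the argument. I do not expect any genuine obstacle here: the only points requiring a word of care are (i) the identity ${\rho^{m}_{s}}=({\rho_{s}})^{m}$, which is what makes the bound $|{\rho^{m}_{s}}(e^{-z\tau})|\le c|z|^{m}$ immediate, and (ii) the fact that the shift factor $e^{-mz\tau}$ stays bounded on $\Gamma^{\tau}_{\theta,\kappa}$ because $|z|\tau\le \pi/\sin\theta$ with $\theta$ close to $\pi/2$, but this is already absorbed inside the Lemma \ref{Lemma3.1} estimate used for $I_{2}$, so it does not even surface explicitly in the present proof.
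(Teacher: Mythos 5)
Your proposal is correct and coincides with the paper's own proof: the same splitting into $I_{1}$ and $I_{2}$, with $I_{1}$ bounded via $|{\rho^{m}_{s}}(e^{-z\tau})|\leq c|z|^{m}$ together with Lemma \ref{Lemma3.3}, and $I_{2}$ bounded by the last estimate of Lemma \ref{Lemma3.1}. Your extra remarks on ${\rho^{m}_{s}}=({\rho_{s}})^{m}$ and the boundedness of $e^{-mz\tau}$ are correct clarifications of points the paper leaves implicit.
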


\begin{proof}
We consider
\begin{equation*}
\begin{split}
 {\rho^{m}_{s}}(e^{-z\tau})   \frac{\gamma_{l}(e^{-z\tau})}{\Gamma(l+1)} \tau^{l+1}  - e^{-mz\tau} \frac{z^{m}}{z^{l+1}}  = I_{1} + I_{2}
\end{split}
\end{equation*}
with
\begin{equation*}
\begin{split}
I_{1}  &=  {\rho^{m}_{s}}(e^{-z\tau})  \left( \frac{\gamma_{l}(e^{-z\tau})}{\Gamma(l+1)} \tau^{l+1} -  \frac{1}{z^{l+1}} \right) \quad {\rm and} \quad
I_{2} =  \left( {\rho^{m}_{s}}(e^{-z\tau})  - e^{-mz\tau}z^{m} \right) \frac{1}{z^{l+1}}.
\end{split}
\end{equation*}
According to Lemmas \ref{Lemma3.1} and \ref{Lemma3.3}, we obtain
\begin{equation*}
\left| I_{1} \right| \leq
\left\{ \begin{split}
c \tau^{l+1}|z|^{m},    \qquad & l=0,1,3,\ldots,\\
c \tau^{l+2}|z|^{m+1},   \quad & l=2,4,\ldots,
\end{split}\right.
\quad  {\rm and} \quad
\left| I_{2} \right| \leq c \tau^{k}|z|^{k+m-l-1}.
\end{equation*}
The proof is completed.
\end{proof}

From Lemmas \ref{Lemma3.1}--\ref{Lemma3.5}, we have the following results, which will be used in the global convergence analysis.
\begin{lemma}\label{Lemma3.6}
Let ${\rho^{m}_{b}}$, ${\rho^{m}_{s}}$ and $\rho_{w}$  be given in \eqref{CWSBDFkm} and \eqref{CWSBDF}, respectively. Let $\gamma_{l}(\xi)=\sum^{\infty}_{n=0}n^{l} \xi^{n}$ with $0 \leq l < k+m$ and $1\leq m \leq k \leq 7$. Then there exists a positive constant $c$ such that
\begin{equation*}
\left\| K(z) \right\|\leq
c \tau^{l+1} \left| z \right|^{m-1} + c \tau^{k}|z|^{k+m-l-2}.
\end{equation*}
Here the kernel is
\begin{equation*}
K(z)= \left(\frac{{\rho_{w}} (e^{-z\tau})}{\beta + (1-\beta)e^{-z\tau}}- A\right)^{-1} \frac{ G_{1} (e^{-z\tau})}{\beta + (1-\beta)e^{-z\tau}} - (z - A)^{-1}  \frac{z^{m}}{z^{l+1}}
\end{equation*}
with
\begin{equation}\label{eq4.2}
\begin{split}
G_{1}(\xi)
& =  \beta  {\rho^{m}_{b}}(\xi) \frac{\gamma_{l}(\xi)}{\Gamma(l+1)} \tau^{l+1} + (1-\beta) {\rho^{m}_{s}}(\xi) \xi^{1-m}  \frac{\gamma_{l}(\xi)}{\Gamma(l+1)} \tau^{l+1} \\
& \quad - (1-\beta) \tau^{l+1}  {\rho^{m}_{s}}(\xi) \xi^{1-m} \sum^{m-1}_{n=0} n^{l}  \xi^{n}.
\end{split}
\end{equation}
\end{lemma}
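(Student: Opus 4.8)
The plan is to decompose $K(z)$ into a ``symbol'' part and a ``resolvent'' part and to estimate each using Lemmas~\ref{Lemma3.1}--\ref{Lemma3.5} together with the resolvent bound \eqref{resolvent estimate}. Write $\delta_{\tau}(z):={\rho_{w}(e^{-z\tau})}/{(\beta+(1-\beta)e^{-z\tau})}$. By Lemma~\ref{Lemma3.2}, $\delta_{\tau}(z)$ lies in a sector $\Sigma_{\theta'}$ with $\theta'=\pi-\vartheta_{k}+\varepsilon\in(\pi/2,\pi)$, one has $|\beta+(1-\beta)e^{-z\tau}|\ge c$, and $c_{1}|z|\le|\delta_{\tau}(z)|\le c_{2}|z|$ on $\Gamma^{\tau}_{\theta,\kappa}$; hence \eqref{resolvent estimate} gives $\|(\delta_{\tau}(z)-A)^{-1}\|\le c|\delta_{\tau}(z)|^{-1}\le c|z|^{-1}$, and similarly $\|(z-A)^{-1}\|\le c|z|^{-1}$. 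Adding and subtracting $(\delta_{\tau}(z)-A)^{-1}z^{m}/z^{l+1}$ we write $K(z)=K_{1}(z)+K_{2}(z)$ with
\begin{equation*}
K_{1}(z)=(\delta_{\tau}(z)-A)^{-1}\left(\frac{G_{1}(e^{-z\tau})}{\beta+(1-\beta)e^{-z\tau}}-\frac{z^{m}}{z^{l+1}}\right),\qquad
K_{2}(z)=\left[(\delta_{\tau}(z)-A)^{-1}-(z-A)^{-1}\right]\frac{z^{m}}{z^{l+1}}.
\end{equation*}

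For $K_{2}(z)$ I would invoke the resolvent identity $(\delta_{\tau}(z)-A)^{-1}-(z-A)^{-1}=(z-\delta_{\tau}(z))(\delta_{\tau}(z)-A)^{-1}(z-A)^{-1}$ together with the bound $|z-\delta_{\tau}(z)|={|(\beta+(1-\beta)e^{-z\tau})z-\rho_{w}(e^{-z\tau})|}/{|\beta+(1-\beta)e^{-z\tau}|}\le c\tau^{k}|z|^{k+1}$ from Lemma~\ref{Lemma3.2}. Multiplying the three operator factors and $|z^{m}/z^{l+1}|=|z|^{m-l-1}$ yields $\|K_{2}(z)\|\le c\tau^{k}|z|^{k+1}\cdot|z|^{-1}\cdot|z|^{-1}\cdot|z|^{m-l-1}=c\tau^{k}|z|^{k+m-l-2}$, which is the second term of the claimed estimate.

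For $K_{1}(z)$, since $\|(\delta_{\tau}(z)-A)^{-1}\|\le c|z|^{-1}$ and $|\beta+(1-\beta)e^{-z\tau}|\ge c$, it suffices to prove $|G_{1}(e^{-z\tau})-(\beta+(1-\beta)e^{-z\tau})\,z^{m-l-1}|\le c\tau^{l+1}|z|^{m}+c\tau^{k}|z|^{k+m-l-1}$. Writing $\xi=e^{-z\tau}$ and examining $G_{1}$ from \eqref{eq4.2} term by term: Lemma~\ref{Lemma3.4} gives $|\beta\rho^{m}_{b}(\xi)\gamma_{l}(\xi)\tau^{l+1}/\Gamma(l+1)-\beta z^{m-l-1}|\le c\tau^{l+1}|z|^{m}+c\tau^{k}|z|^{k+m-l-1}$; Lemma~\ref{Lemma3.5} combined with the exact identity $\xi^{1-m}e^{-mz\tau}=e^{-z\tau}=\xi$ gives $|(1-\beta)\rho^{m}_{s}(\xi)\xi^{1-m}\gamma_{l}(\xi)\tau^{l+1}/\Gamma(l+1)-(1-\beta)\xi\,z^{m-l-1}|\le c\tau^{l+1}|z|^{m}+c\tau^{k}|z|^{k+m-l-1}$; and the correction term obeys $|(1-\beta)\tau^{l+1}\rho^{m}_{s}(\xi)\xi^{1-m}\sum_{n=0}^{m-1}n^{l}\xi^{n}|\le c\tau^{l+1}|z|^{m}$, using $|\rho^{m}_{s}(e^{-z\tau})|\le c|z|^{m}$ (a consequence of Lemma~\ref{Lemma3.1}) and the boundedness of $\xi^{1-m}$ and of the finite sum on $\Gamma^{\tau}_{\theta,\kappa}$. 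Summing the three contributions reproduces $(\beta+(1-\beta)\xi)z^{m-l-1}$ up to the stated remainder, so $\|K_{1}(z)\|\le c\tau^{l+1}|z|^{m-1}+c\tau^{k}|z|^{k+m-l-2}$. Throughout I would also use $\tau|z|\le\pi/\sin\theta$ on $\Gamma^{\tau}_{\theta,\kappa}$ to absorb the even-index refinements $c\tau^{l+2}|z|^{m+1}$ produced by Lemmas~\ref{Lemma3.4}--\ref{Lemma3.5} into $c\tau^{l+1}|z|^{m}$. Adding the bounds for $K_{1}(z)$ and $K_{2}(z)$ completes the proof.

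The step I expect to be the main obstacle is the bookkeeping in $K_{1}(z)$: one must carefully track the exponential factors $e^{-mz\tau}$ generated by $\rho^{m}_{s}$ and the shift powers $\xi^{1-m}$ so that the two $(1-\beta)$-contributions recombine \emph{exactly} into $(1-\beta)\xi\,z^{m-l-1}$. This cancellation is precisely what the correction sum $\sum_{n=0}^{m-1}n^{l}\xi^{n}$ in \eqref{IDWSBDF} is designed to produce; without it one is left with a term that is $O(1)$ in powers of $|z|$ and the estimate fails — which is exactly the order reduction observed in Table~\ref{table:2.2}.
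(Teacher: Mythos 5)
Your proposal is correct and follows essentially the same route as the paper: the identical splitting $K=K_1+K_2$ (the paper's $I_1+I_2$), the resolvent identity with $|z-\delta_\tau(z)|\le c\tau^k|z|^{k+1}$ for the second piece, and the term-by-term treatment of $G_1(e^{-z\tau})-(\beta+(1-\beta)e^{-z\tau})z^{m-l-1}$ via Lemmas~\ref{Lemma3.4}, \ref{Lemma3.5} and \ref{Lemma3.1} (with the shift factor $\xi^{1-m}e^{-mz\tau}=\xi$ reconciling the $(1-\beta)$ contributions) for the first. Your explicit absorption of the even-$l$ term $c\tau^{l+2}|z|^{m+1}$ via $\tau|z|\le\pi/\sin\theta$ is a detail the paper leaves implicit between its estimate \eqref{eq4.4} and the stated bound.
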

	
\begin{proof}
Let $ K(z) = I_{1} + I_{2} $ with
\begin{equation}\label{eq4.1}
\begin{split}
I_{1}  & = \left(\frac{{\rho_{w}} (e^{-z\tau})}{\beta + (1-\beta)e^{-z\tau}}- A\right)^{-1}  \frac{ G_{1} (e^{-z\tau}) - \left(\beta + (1-\beta)e^{-z\tau}\right) z^{m-l-1}}{\beta + (1-\beta)e^{-z\tau}}, \\
I_{2} & = \left[ \left(\frac{{\rho_{w}} (e^{-z\tau})}{\beta + (1-\beta)e^{-z\tau}}- A\right)^{-1} - (z - A)^{-1}  \right] z^{m-l-1}.
\end{split}
\end{equation}
The resolvent estimate \eqref{resolvent estimate} and Lemma \ref{Lemma3.2} imply directly
\begin{equation}\label{discrete fractional resolvent estimate}
\left\| \left(\frac{{\rho_{w}} (e^{-z\tau})}{\beta + (1-\beta)e^{-z\tau}}- A\right)^{-1} \right\| \leq c |z|^{-1}.
\end{equation}
From \eqref{eq4.2}, we obtain
\begin{equation*}
\begin{split}
& G_{1} (e^{-z\tau}) - \left(\beta + (1-\beta)e^{-z\tau}\right) z^{m-l-1} \\
&  =  \beta \left( {\rho^{m}_{b}}(e^{-z\tau}) \frac{\gamma_{l}(e^{-z\tau})}{\Gamma(l+1)} \tau^{l+1} -  z^{m-l-1}\right) \\
& \quad + (1-\beta) e^{-(1-m)z\tau} \left(  {\rho^{m}_{s}}(e^{-z\tau})   \frac{\gamma_{l}(e^{-z\tau})}{\Gamma(l+1)} \tau^{l+1}  - e^{-mz\tau}z^{m-l-1}  \right) \\
& \quad  -  (1-\beta) \tau^{l+1}  {\rho^{m}_{s}}(e^{-z\tau}) e^{-(1-m)z\tau} \sum^{m-1}_{n=0} n^{l}  e^{-nz\tau}.
\end{split}
\end{equation*}
According to Lemmas \ref{Lemma3.4}, \ref{Lemma3.5} and \ref{Lemma3.1}, it yields
\begin{equation}\label{eq4.4}
\left\| I_{1} \right\|\leq \left\{ \begin{split}
c \tau^{l+1} \left| z \right|^{m-1} + c \tau^{k}|z|^{k+m-l-2},  \qquad\qquad\qquad ~ \, & l=0,1,3,\ldots,\\
c \tau^{l+1} \left| z \right|^{m-1} + c \tau^{l+2} \left| z \right|^{m} + c \tau^{k}|z|^{k+m-l-2},   \quad & l=2,4,\ldots.
\end{split}\right.
\end{equation}
		
On the other hand, using \eqref{resolvent estimate}, \eqref{discrete fractional resolvent estimate}, Lemma \ref{Lemma3.2} and the identity
\begin{equation*}
\begin{split}
& \left(\frac{{\rho_{w}} (e^{-z\tau})}{\beta + (1-\beta)e^{-z\tau}}- A\right)^{-1} - (z - A)^{-1} \\
& = \left(\frac{{\rho_{w}} (e^{-z\tau})}{\beta + (1-\beta)e^{-z\tau}}- A\right)^{-1} (z - A)^{-1} \left(z - \frac{{\rho_{w}} (e^{-z\tau})}{\beta + (1-\beta)e^{-z\tau}} \right),
\end{split}
\end{equation*}
we estimate $I_{2}$ as
\begin{equation*}
\| I_{2} \|  \leq c \tau^{k} |z|^{k + 1}  c |z|^{-1} c |z|^{-1}  |z|^{m-l-1} \leq c \tau^{k} |z|^{k+m-l-2}.
\end{equation*}
The proof is completed.
\end{proof}

\begin{lemma}\label{Lemma3.7}
Let ${\rho^{m}_{b}}$, ${\rho^{m}_{s}}$ and ${\rho_{w}}$  be given in \eqref{CWSBDFkm} and \eqref{CWSBDF}, respectively. Let $\gamma_{m}(\xi)=\sum^{\infty}_{n=0}n^{m} \xi^{n}$ with $1\leq m \leq k \leq 7$. Then there exists a positive constant $c$ such that
\begin{equation*}
\left\|\mathcal{K}(z)  \right\|\leq
c \tau^{m+1} \left| z \right|^{m} + c \tau^{k}|z|^{k-1}.  
\end{equation*}
Here the kernel is
\begin{equation*}
\mathcal{K} (z)= \left(\frac{{\rho_{w}} (e^{-z\tau})}{\beta + (1-\beta)e^{-z\tau}}- A\right)^{-1} \left(\beta + (1-\beta)e^{-z\tau}\right)^{-1} G_{2} (e^{-z\tau}) A - (z - A)^{-1}  z^{-1} A
\end{equation*}
with
\begin{equation}\label{eq4.3}
\begin{split}
G_{2}(\xi)
& =  \beta  {\rho^{m}_{b}}(\xi) \frac{\gamma_{m}(\xi)}{\Gamma(m+1)} \tau^{m+1} + (1-\beta) {\rho^{m}_{s}}(\xi) \xi^{1-m}  \frac{\gamma_{m}(\xi)}{\Gamma(m+1)} \tau^{m+1} \\
& \quad - (1-\beta) \tau^{m+1}  {\rho^{m}_{s}}(\xi) \xi^{1-m} \sum^{m-1}_{n=0} n^{m}  \xi^{n}.
\end{split}
\end{equation}
\end{lemma}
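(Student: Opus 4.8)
The plan is to mirror the proof of Lemma~\ref{Lemma3.6} at the special value $l=m$, noting that then $G_{1}=G_{2}$ (compare \eqref{eq4.2} and \eqref{eq4.3}) and $z^{m-l-1}=z^{-1}$; the only genuinely new feature of $\mathcal{K}(z)$ is the extra factor $A$ standing on the right, which must not be treated as a bounded operator. Writing $B_{\tau}:={\rho_{w}}(e^{-z\tau})/(\beta+(1-\beta)e^{-z\tau})$ for brevity, the tools for handling it are the identities $(B_{\tau}-A)^{-1}A=B_{\tau}(B_{\tau}-A)^{-1}-I$ and $(z-A)^{-1}z^{-1}A=(z-A)^{-1}-z^{-1}I$, which say that multiplying a resolvent by $A$ costs only one extra power of $|z|$ together with a bounded identity term; the decomposition has to be chosen so that these identity terms cancel.

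First I would split $\mathcal{K}(z)=\mathcal{I}_{1}+\mathcal{I}_{2}$ with
\[
\mathcal{I}_{1}=(B_{\tau}-A)^{-1}\,\frac{G_{2}(e^{-z\tau})-(\beta+(1-\beta)e^{-z\tau})z^{-1}}{\beta+(1-\beta)e^{-z\tau}}\,A,\qquad \mathcal{I}_{2}=\big[(B_{\tau}-A)^{-1}-(z-A)^{-1}\big]z^{-1}A,
\]
which indeed sum to $\mathcal{K}(z)$ because the two copies of $(B_{\tau}-A)^{-1}z^{-1}A$ telescope.

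For $\mathcal{I}_{1}$, the scalar prefactor commutes with $(B_{\tau}-A)^{-1}$ and with $A$, so $\mathcal{I}_{1}$ equals that scalar times $B_{\tau}(B_{\tau}-A)^{-1}-I=A(B_{\tau}-A)^{-1}$; by Lemma~\ref{Lemma3.2} ($|\beta+(1-\beta)e^{-z\tau}|\ge c$, $|B_{\tau}|\le c|z|$) and \eqref{discrete fractional resolvent estimate} one gets $\|B_{\tau}(B_{\tau}-A)^{-1}-I\|\le c$, so it remains only to bound the scalar $|G_{2}(e^{-z\tau})-(\beta+(1-\beta)e^{-z\tau})z^{-1}|$. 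But that is exactly the quantity already controlled inside the proof of Lemma~\ref{Lemma3.6} with $l=m$: expanding it along the three terms of \eqref{eq4.3} and invoking Lemmas~\ref{Lemma3.4} and~\ref{Lemma3.5} at $l=m$ together with Lemma~\ref{Lemma3.1} gives $|G_{2}(e^{-z\tau})-(\beta+(1-\beta)e^{-z\tau})z^{-1}|\le c\tau^{m+1}|z|^{m}+c\tau^{k}|z|^{k-1}$, where for even $m$ the contribution $c\tau^{m+2}|z|^{m+1}$ is absorbed into $c\tau^{m+1}|z|^{m}$ using $\tau|z|\le\pi/\sin\theta$ on $\Gamma^{\tau}_{\theta,\kappa}$; hence $\|\mathcal{I}_{1}\|\le c\tau^{m+1}|z|^{m}+c\tau^{k}|z|^{k-1}$. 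For $\mathcal{I}_{2}$, I would use $(z-A)^{-1}z^{-1}A=(z-A)^{-1}-z^{-1}I$ and the resolvent identity $(B_{\tau}-A)^{-1}-(z-A)^{-1}=(B_{\tau}-A)^{-1}(z-B_{\tau})(z-A)^{-1}$ to rewrite $\mathcal{I}_{2}=(B_{\tau}-A)^{-1}(z-B_{\tau})(z-A)^{-1}-z^{-1}(z-B_{\tau})(B_{\tau}-A)^{-1}$, and then $|z-B_{\tau}|\le c\tau^{k}|z|^{k+1}$ from Lemma~\ref{Lemma3.2} together with \eqref{resolvent estimate} and \eqref{discrete fractional resolvent estimate} yields $\|\mathcal{I}_{2}\|\le c\tau^{k}|z|^{k-1}$. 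Adding the two bounds gives the assertion.

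The main obstacle is precisely the unbounded operator $A$ at the end of $\mathcal{K}(z)$: one cannot estimate it directly, and must instead exploit the near-identities $B_{\tau}\approx z$ and $G_{2}(e^{-z\tau})\approx(\beta+(1-\beta)e^{-z\tau})z^{-1}$ so that the identity terms produced by $(B_{\tau}-A)^{-1}A$ and $(z-A)^{-1}A$ cancel rather than accumulate. Once the decomposition above is in place, everything reduces to the already-established scalar estimates of Lemmas~\ref{Lemma3.1},~\ref{Lemma3.4} and~\ref{Lemma3.5} (specialised to $l=m$) and the resolvent bounds of Lemma~\ref{Lemma3.2}.
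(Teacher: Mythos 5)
Your proof is correct, and in substance it travels the same road as the paper's: both arguments neutralize the trailing unbounded factor $A$ through the identities $(z-A)^{-1}z^{-1}A=(z-A)^{-1}-z^{-1}$ and $\bigl(\frac{{\rho_{w}}(e^{-z\tau})}{\beta+(1-\beta)e^{-z\tau}}-A\bigr)^{-1}A=\frac{{\rho_{w}}(e^{-z\tau})}{\beta+(1-\beta)e^{-z\tau}}\bigl(\frac{{\rho_{w}}(e^{-z\tau})}{\beta+(1-\beta)e^{-z\tau}}-A\bigr)^{-1}-I$, and both then reduce everything to the scalar estimate underlying \eqref{eq4.4} at $l=m$, the bound $|{\rho_{w}}(e^{-z\tau})-(\beta+(1-\beta)e^{-z\tau})z|\leq c\tau^{k}|z|^{k+1}$ from Lemma~\ref{Lemma3.2}, and the two resolvent estimates \eqref{resolvent estimate} and \eqref{discrete fractional resolvent estimate}. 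The only real difference is bookkeeping: the paper uses a three-term split $\mathcal{K}=I_{1}+I_{2}+I_{3}$, where $I_{1}$ is exactly the kernel $K(z)$ of Lemma~\ref{Lemma3.6} with $l=m$ multiplied on the right by ${\rho_{w}}(e^{-z\tau})/(\beta+(1-\beta)e^{-z\tau})$, $I_{2}$ carries the scalar discrepancy between that quotient and $z$, and $I_{3}$ is the purely scalar remainder $z^{-1}-(\beta+(1-\beta)e^{-z\tau})^{-1}G_{2}(e^{-z\tau})$; your two-term split instead folds the scalar remainder into $\mathcal{I}_{1}$ by commuting it past the resolvent and quoting only the scalar content of \eqref{eq4.4} rather than Lemma~\ref{Lemma3.6} as a black box. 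Both versions require absorbing the even-index term $c\tau^{m+2}|z|^{m+1}$ into $c\tau^{m+1}|z|^{m}$ via $\tau|z|\leq\pi/\sin\theta$ on $\Gamma^{\tau}_{\theta,\kappa}$, which you state explicitly while the paper leaves it implicit; neither choice affects correctness.
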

	
\begin{proof}
Since $(z - A)^{-1} z^{-1} A =    (z - A)^{-1} - z^{-1} $ and
\begin{equation*}
\begin{split}
& \left(\frac{{\rho_{w}} (e^{-z\tau})}{\beta + (1-\beta)e^{-z\tau}}- A\right)^{-1} \left(\beta + (1-\beta)e^{-z\tau}\right)^{-1}  G_{2}(e^{-z\tau}) A \\
& = \left(\frac{{\rho_{w}} (e^{-z\tau})}{\beta + (1-\beta)e^{-z\tau}}- A\right)^{-1} \left(\beta + (1-\beta)e^{-z\tau}\right)^{-1}  G_{2}(e^{-z\tau}) \frac{{\rho_{w}} (e^{-z\tau})}{\beta + (1-\beta)e^{-z\tau}} \\
& \quad - \left(\beta + (1-\beta)e^{-z\tau}\right)^{-1} G_{2}(e^{-z\tau}).
\end{split}
\end{equation*}
Then we can split $\mathcal{K}(z)$ as
\begin{equation*}
\mathcal{K}(z) = I_{1} + I_{2} + I_{3}
\end{equation*}
with
\begin{equation*}
\begin{split}
I_{1} =& \left[ \left(\frac{{\rho_{w}} (e^{-z\tau})}{\beta + (1-\beta)e^{-z\tau}}- A\right)^{-1} \! \frac{ G_{2} (e^{-z\tau})}{\beta + (1-\beta)e^{-z\tau}} - (z - A)^{-1} z^{-1} \right] \frac{{\rho_{w}} (e^{-z\tau})}{\beta + (1-\beta)e^{-z\tau}}, \\
I_{2} =& (z - A)^{-1} z^{-1} \left[ \frac{{\rho_{w}} (e^{-z\tau})}{\beta + (1  -  \beta)e^{-z\tau}} - z \right], \\
I_{3} =& z^{-1}  - \left(\beta + (1-\beta)e^{-z\tau}\right)^{-1} G_{2} (e^{-z\tau}) = \frac{ \left(\beta + (1-\beta)e^{-z\tau} \right) z^{-1}   -  G_{2} (e^{-z\tau})}{\beta + (1-\beta)e^{-z\tau}}.
\end{split}
\end{equation*}

From Lemma \ref{Lemma3.6} with $l=m$ and Lemma \ref{Lemma3.2}, we estimate $I_{1}$ as follows
\begin{equation*}
\left\|I_{1} \right\| \leq  \left\| K(z) \right\|
\left| \frac{{\rho_{w}} (e^{-z\tau})}{\beta + (1-\beta)e^{-z\tau}} \right| \leq
c \tau^{m+1} \left| z \right|^{m} + c \tau^{k}|z|^{k-1}.
\end{equation*}
Moreover, using \eqref{resolvent estimate} and Lemma \ref{Lemma3.2}, we have
\begin{equation*}
\left\| I_{2}  \right\| \leq c |z|^{-2} \tau^{k}|z|^{k+1}   \leq   c \tau^{k} \left| z \right|^{k-1}.
\end{equation*}
On the other hand,  according to \eqref{eq4.1} and \eqref{eq4.4} with $l=m$, it yields
\begin{equation*}
\left\|I_{3} \right\| \leq
c \tau^{m+1} \left| z \right|^{m} + c \tau^{k} |z|^{k-1}.
\end{equation*}
The proof is completed.
\end{proof}

\section{Error analysis}\label{Sec5}
We first consider the homogeneous problem for \eqref{PQ}.
\begin{theorem}\label{Theorem3.2}
Let $V(t_{n})$ and $V^{n}$ be the solutions of \eqref{RPQ} and \eqref{IDWSBDF}, respectively.
Let $v\in L^{2}(\Omega)$ and $f(t)=0$.
Then the following error estimate holds for any $t_n>0$:
\begin{equation*}
\left\| V(t_{n}) -   V^{n}  \right\|_{L^2(\Omega)} \leq \left( c \tau^{m+1} t_{n}^{-m-1} + c\tau^{k} t_{n}^{-k} \right) \left\| v \right\|_{L^2(\Omega)}
\end{equation*}
with $1\leq m \leq k \leq 7$.
\end{theorem}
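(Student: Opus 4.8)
The plan is to start from the contour-integral representation of $V(t_n)$ in \eqref{LT} and of $V^n$ in Lemma \ref{Lemma2.1}, and to match them term by term. Since $f=0$, only the $Av$-contributions survive: the exact solution is
\begin{equation*}
V(t_n)=\frac{1}{2\pi i}\int_{\Gamma_{\theta,\kappa}} e^{zt_n}(z-A)^{-1} z^{-1} Av\, dz,
\end{equation*}
while the discrete solution is $V^n = \frac{\tau}{2\pi i}\int_{\Gamma^\tau_{\theta,\kappa}} e^{zt_n} (\,\cdot\,)^{-1}(\beta+(1-\beta)e^{-z\tau})^{-1} G(e^{-z\tau})\,dz$, where in the homogeneous case $\widetilde F\equiv 0$ and only the first two terms of $G(\xi)$ remain. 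The first step is to recognize that, after pulling out $Av$, the bracketed factor acting on $Av$ in the discrete integrand with $l=m$ is exactly $\mathcal K(z) + (z-A)^{-1}z^{-1}A$ with $\mathcal K$ as in Lemma \ref{Lemma3.7} (one checks that $G_2(e^{-z\tau})$ collects precisely the $\gamma_m$-terms and the correction sum $\sum_{n=0}^{m-1} n^m\xi^n$ appearing in $G$). So the error splits as a "kernel" part involving $\mathcal K(z)$ plus a "truncation/deformation" part coming from replacing $\Gamma_{\theta,\kappa}$ by the truncated contour $\Gamma^\tau_{\theta,\kappa}$ and $\frac1{2\pi i}\int e^{zt_n}(z-A)^{-1}z^{-1}A\,dz$ by its discrete analogue.

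The second step is to bound the kernel part: writing it as $\frac{\tau}{2\pi i}\int_{\Gamma^\tau_{\theta,\kappa}} e^{zt_n}\mathcal K(z) Av\,dz$ is not quite right dimensionally, so more precisely one writes $V(t_n)-V^n$ as the difference of the two contour integrals of the operators applied to $v$, inserts $\pm$ the exact kernel, and uses Lemma \ref{Lemma3.7} to get $\|\mathcal K(z)\|\le c\tau^{m+1}|z|^m + c\tau^k|z|^{k-1}$. Then, on $\Gamma^\tau_{\theta,\kappa}$ one parametrizes $z=re^{\pm i\theta}$ (and the small arc $|z|=\kappa$), uses $|e^{zt_n}|\le e^{rt_n\cos\theta}$ with $\cos\theta<0$, and estimates
\begin{equation*}
\int_{\Gamma^\tau_{\theta,\kappa}} |e^{zt_n}|\,\big(\tau^{m+1}|z|^m+\tau^k|z|^{k-1}\big)\,|dz| \le c\,\tau^{m+1}\!\int_\kappa^\infty e^{-crt_n} r^m\,dr + c\,\tau^k\!\int_\kappa^\infty e^{-crt_n} r^{k-1}\,dr.
\end{equation*}
Choosing $\kappa \asymp 1/t_n$ (as is standard; one also needs $\kappa\le$ a fixed constant over $\tau$ so the truncated contour is nonempty, which is automatic once $t_n\ge\tau$, and the finitely many remaining cases $t_n\lesssim\tau$ are handled by the crude stability bound) gives $\int_\kappa^\infty e^{-crt_n} r^j\,dr \le c\, t_n^{-j-1}$, hence this part contributes $c\tau^{m+1}t_n^{-m-1}+c\tau^k t_n^{-k}$, exactly the claimed rate. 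Applying the operator to $v$ only costs a factor $\|v\|_{L^2(\Omega)}$.

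The third step is the truncation/deformation error, i.e.\ comparing $\frac{1}{2\pi i}\int_{\Gamma_{\theta,\kappa}} e^{zt_n}(z-A)^{-1}z^{-1}Av\,dz$ with the discrete version $\frac{\tau}{2\pi i}\int_{\Gamma^\tau_{\theta,\kappa}} e^{zt_n}(z-A)^{-1}z^{-1}Av\,dz$ (the subtracted piece from $\mathcal K$). Here the $\tau$ in front versus the missing $\tau$ is reconciled by the change of variables $\xi=e^{-z\tau}$ underlying Lemma \ref{Lemma2.1}; the genuine error is (i) the tails $|\Im z|>\pi/\tau$ of the original contour, which decay exponentially in $t_n/\tau$ and are harmlessly small, and (ii) the periodization effect, which is again controlled by the resolvent bound $\|(z-A)^{-1}z^{-1}A\|\le c$ and the exponential factor; this is a standard estimate (cf.\ the argument in \cite{Jin2017Correction}) and contributes at most $c\tau^k t_n^{-k}$ (indeed much smaller). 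The \textbf{main obstacle} I anticipate is purely bookkeeping rather than conceptual: verifying that the algebraic rearrangement of $G(e^{-z\tau})$ — including the correction terms $-\frac1{\tau^m}\sum_{j=n}^{n+m-1}s_j t^m_{n+m-1-j}$ that define the \emph{corrected} scheme — produces exactly $G_2$ of Lemma \ref{Lemma3.7} with $l=m$, so that Lemma \ref{Lemma3.7} applies verbatim; without the correction terms one would instead land on Lemma \ref{Lemma3.6} with a worse exponent, which is precisely why the uncorrected ID$m$-WSBDF$k$ loses order. Once that identification is in place, the two contour estimates above assemble into the stated bound.
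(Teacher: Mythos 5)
Your proposal matches the paper's proof in all essentials: the same splitting of $V(t_n)-V^n$ into a kernel term $-\frac{1}{2\pi i}\int_{\Gamma^\tau_{\theta,\kappa}}e^{zt_n}\mathcal{K}(z)v\,dz$ bounded via Lemma \ref{Lemma3.7} together with the contour estimates \eqref{eq3.3}, plus the tail over $\Gamma_{\theta,\kappa}\setminus\Gamma^\tau_{\theta,\kappa}$ handled by the resolvent bound and $1\leq(\tau r\sin\theta/\pi)^k$; and the same key identification that the homogeneous part of $G$ is $\tau^{-1}G_2$ with $l=m$. The approach is correct and essentially identical to the paper's.
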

\begin{proof}
From \eqref{LT} and Lemma \ref{Lemma2.1}, we have
\begin{equation*}
V(t_{n}) =  \frac{1}{2\pi i} \int_{\Gamma_{\theta, \kappa}} e^{zt_{n}} (z - A)^{-1}  z^{-1}  Av dz
\end{equation*}
and
\begin{equation*}
V^{n}=\frac{1}{2\pi i}\int_{\Gamma^{\tau}_{\theta,\kappa}} e^{zt_n} \left( \frac{{\rho_{w}} (e^{-z\tau})}{\beta + (1-\beta)e^{-z\tau}} - A\right)^{-1} \left(\beta + (1-\beta)e^{-z\tau}\right)^{-1} G_{2} (e^{-z\tau}) Av dz,
\end{equation*}
where $G_{2}(\xi)$ is given in \eqref{eq4.3} and $\gamma_{m}(\xi)=\sum^{\infty}_{n=1}n^{m} \xi^{n}$.
Then we obtain
\begin{equation*}
V(t_{n}) - V^{n}=J_1 + J_2
\end{equation*}
with
\begin{equation*}
J_{1} = \frac{1}{2\pi i}\int_{\Gamma^{\tau}_{\theta,\kappa}} -e^{zt_{n}} \mathcal{K} (z) v dz \quad {\rm and} \quad
J_{2} =\frac{1}{2\pi i}\int_{\Gamma_{\theta,\kappa}\setminus\Gamma^{\tau}_{\theta,\kappa}}{e^{zt_{n}}\left(z - A\right)^{-1} z^{-1} Av  }dz
\end{equation*}
with $\mathcal{K} (z)$ in Lemma \ref{Lemma3.7}.

According to the triangle inequality and Lemma \ref{Lemma3.7}, this yields
\begin{equation*}
\begin{split}
\| J_1 \|_{L^2(\Omega)}
&  \leq c  \int^{\frac{\pi}{\tau\sin\theta}}_{\kappa} e^{rt_{n}\cos\theta} \left(\tau^{m+1}  r^{m} +  \tau^{k} r^{k-1} \right) dr \left\|  v \right\|_{L^2(\Omega)} \\
& \quad + c \int^{\theta}_{-\theta}e^{\kappa t_{n} \cos\psi} \left(\tau^{m+1}  \kappa^{m+1} +  \tau^{k} \kappa^{k} \right) d\psi  \left\|  v \right\|_{L^2(\Omega)} \\
&  \leq  \left( c\tau^{m+1}t_{n}^{-m-1} + c\tau^{k}t_{n}^{-k} \right) \left\|  v \right\|_{L^2(\Omega)}, \quad m=1,2,\ldots,7,
\end{split}
\end{equation*}
where  we use
\begin{equation}\label{eq3.3}
\int^{\frac{\pi}{\tau\sin\theta}}_{\kappa} e^{rt_{n}\cos\theta} r^{k-1}dr \leq c t_n^{-k} \quad {\rm and} \quad
\int^{\theta}_{-\theta}e^{\kappa t_{n} \cos\psi} \kappa^{k} d \psi \leq c t_n^{-k}.
\end{equation}
Using $(z - A)^{-1} z^{-1} A =   (z - A)^{-1} - z^{-1}$ and resolvent estimate \eqref{resolvent estimate}, we have
\begin{equation*}
\begin{split}
\|J_2 \|_{L^2(\Omega)}
& \leq c  \int^{\infty}_{\frac{\pi}{\tau\sin\theta}} e^{rt_{n}\cos\theta}r^{-1}dr \left\| v \right\|_{L^2(\Omega)}
 \leq c\tau^{k}  \int^{\infty}_{\frac{\pi}{\tau\sin\theta}} e^{rt_{n}\cos\theta}r^{k-1}dr  \left\| v \right\|_{L^2(\Omega)} \\
& \leq  c\tau^{k}t_{n}^{-k} \left\| v \right\|_{L^2(\Omega)}.
\end{split}
\end{equation*}
Here we use $1\leq (\frac{\sin \theta}{\pi})^{k} \tau^{k} r^{k}$ for $r\geq \frac{\pi}{\tau \sin \theta}$. The proof is completed.
\end{proof}

Now, we consider the general source function $f(t)$.
Using  Taylor series  expansion  with the remainder term in integral form, it yields
\begin{equation*}
\begin{split}
f(t)  &= \sum_{l=0}^{k-1}  \frac{t^l}{\Gamma(l+1)} \partial_{t}^{l} f(0)  + \frac{t^{k-1}}{\Gamma(k)} \ast \partial_{t}^{k} f(t),\\
F(t)
& =  \sum_{l=0}^{k+m-1}  \frac{t^l}{\Gamma(l+1)} \partial_{t}^{l} F(0)  + \frac{t^{k+m-1}}{\Gamma(k+m)}  \ast  \partial_{t}^{k+m} F(t).
\end{split}
\end{equation*}
Using the identity $f(t) = \partial_t^mF(t)$ in \eqref{smoothing1}, we obtain
\begin{equation}\label{gs3.3}
\begin{split}
F(t) =  \sum_{l=0}^{k+m-1}  \frac{t^l}{\Gamma(l+1)} \partial_{t}^{l-m} f(0)  + \frac{t^{k+m-1}}{\Gamma(k+m)}  \ast  \partial_{t}^{k} f(t)
\end{split}
\end{equation}
with $\partial_{t}^{-j} f(0)=J^{j}f(0)=0$, $j > 0$.
Then we obtain the following results.
\begin{lemma}\label{lemma3.9}
Let $V(t_{n})$ and $V^{n}$ be the solutions of \eqref{RPQ} and \eqref{IDWSBDF}, respectively.
Let $v=0$ and $F(t): = \frac{t^l}{\Gamma(l+1)} \partial_{t}^{l-m} f(0)$ with $l=0,1,2, \ldots, k+m-1$, $1\leq m \leq k\leq 7$.
Then the following error estimate holds for any $t_n>0$:
\begin{equation*}
\left\| V(t_{n})  -  V^{n}  \right\|_{L^2(\Omega)} \leq \left( c\tau^{l+1}t_{n}^{-m} + c\tau^{k}t_{n}^{l+1-k-m} \right) \left\| \partial_{t}^{l-m} f(0) \right\|_{L^2(\Omega)}.
\end{equation*}
\end{lemma}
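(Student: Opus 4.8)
The plan is to follow exactly the same contour-integral strategy used in the proof of Theorem~\ref{Theorem3.2}, but now carried out in the inhomogeneous setting with $v=0$ and $F(t)=\frac{t^l}{\Gamma(l+1)}\partial_t^{l-m}f(0)$. First I would compute the exact solution $V(t_n)$ from \eqref{LT}: since $\widehat F(z)=\frac{1}{z^{l+1}}\partial_t^{l-m}f(0)$ and $v=0$, the exact solution is
\begin{equation*}
V(t_n)=\frac{1}{2\pi i}\int_{\Gamma_{\theta,\kappa}}e^{zt_n}(z-A)^{-1}z^{m}\cdot\frac{1}{z^{l+1}}\,dz\,\partial_t^{l-m}f(0)
=\frac{1}{2\pi i}\int_{\Gamma_{\theta,\kappa}}e^{zt_n}(z-A)^{-1}\frac{z^{m}}{z^{l+1}}\,dz\,\partial_t^{l-m}f(0).
\end{equation*}
On the discrete side, I would substitute $F^n=\frac{t_n^l}{\Gamma(l+1)}\partial_t^{l-m}f(0)$ into the $G(\xi)$ of Lemma~\ref{Lemma2.1}; because the $v$-part vanishes and $\widetilde F(\xi)=\frac{\gamma_l(\xi)}{\Gamma(l+1)}\tau^{l+1}\partial_t^{l-m}f(0)$, the $F$-terms of $G(\xi)$ collapse precisely to $G_1(e^{-z\tau})\,\partial_t^{l-m}f(0)$ with $G_1$ defined in \eqref{eq4.2}. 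Thus $V^n=\frac{1}{2\pi i}\int_{\Gamma^\tau_{\theta,\kappa}}e^{zt_n}\bigl(\frac{\rho_w(e^{-z\tau})}{\beta+(1-\beta)e^{-z\tau}}-A\bigr)^{-1}\frac{G_1(e^{-z\tau})}{\beta+(1-\beta)e^{-z\tau}}\,dz\,\partial_t^{l-m}f(0)$.

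Next I would split $V(t_n)-V^n=J_1+J_2$, where $J_1$ is the integral over $\Gamma^\tau_{\theta,\kappa}$ of $-e^{zt_n}K(z)\partial_t^{l-m}f(0)$ with $K(z)$ the kernel of Lemma~\ref{Lemma3.6}, and $J_2$ is the tail integral of the exact-solution integrand over $\Gamma_{\theta,\kappa}\setminus\Gamma^\tau_{\theta,\kappa}$. For $J_1$, Lemma~\ref{Lemma3.6} gives $\|K(z)\|\le c\tau^{l+1}|z|^{m-1}+c\tau^k|z|^{k+m-l-2}$, so parametrizing $\Gamma^\tau_{\theta,\kappa}=\Gamma_\kappa\cup\Gamma_\theta$ and using the elementary bounds \eqref{eq3.3} (namely $\int_\kappa^{\pi/(\tau\sin\theta)}e^{rt_n\cos\theta}r^{j-1}\,dr\le ct_n^{-j}$ and $\int_{-\theta}^{\theta}e^{\kappa t_n\cos\psi}\kappa^{j}\,d\psi\le ct_n^{-j}$) with $j=m$ and $j=k+m-l-1$ respectively, one gets $\|J_1\|_{L^2(\Omega)}\le(c\tau^{l+1}t_n^{-m}+c\tau^k t_n^{l+1-k-m})\|\partial_t^{l-m}f(0)\|_{L^2(\Omega)}$. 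For $J_2$, I would write $(z-A)^{-1}z^{m-l-1}$ and use the resolvent estimate \eqref{resolvent estimate} together with $|z|\tau\ge\pi/\sin\theta$ on the excluded part, so that $|z|^{m-l-1}\le c\tau^{k+l-m+1}|z|^k$ when $m-l-1\le 0$ (and similarly absorbing extra powers when $m-l-1>0$, since $r\ge\pi/(\tau\sin\theta)$), yielding $\|J_2\|_{L^2(\Omega)}\le c\tau^k\int_{\pi/(\tau\sin\theta)}^\infty e^{rt_n\cos\theta}r^{k+m-l-2}\,dr\,\|\partial_t^{l-m}f(0)\|_{L^2(\Omega)}\le c\tau^k t_n^{l+1-k-m}\|\partial_t^{l-m}f(0)\|_{L^2(\Omega)}$.

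The one point requiring a little care is the algebraic bookkeeping that identifies the discrete $F$-part of $G(\xi)$ with exactly $G_1(e^{-z\tau})$: the correction term $\sum_{n=0}^{m-1}F^n\xi^n$ in $G(\xi)$ becomes $\frac{\tau^{l+1}}{\Gamma(l+1)}\sum_{n=0}^{m-1}n^l\xi^n\,\partial_t^{l-m}f(0)$, which matches the third line of \eqref{eq4.2}, while the $\widetilde F(\xi)$ contributions match the first two lines; so the match is exact and no remainder is generated. Given that identification, the result is a direct corollary of Lemma~\ref{Lemma3.6} and the tail estimate, mirroring Theorem~\ref{Theorem3.2} line by line. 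The main (mild) obstacle is simply to verify the sign/index conventions in $\gamma_l$ versus $\overline{\gamma}_l$ (the $n=0$ term is harmless since $0^l=0$ for $l\ge 1$ and for $l=0$ it is absorbed, exactly as in Lemma~\ref{Lemma3.3}) so that $\widetilde F(\xi)$ is correctly written as $\frac{\gamma_l(e^{-z\tau})}{\Gamma(l+1)}\tau^{l+1}\partial_t^{l-m}f(0)$; everything else is the routine contour estimate already rehearsed above.
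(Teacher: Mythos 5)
Your proposal is correct and follows essentially the same route as the paper: the same representation of $V(t_n)$ and $V^n$ via \eqref{LT} and Lemma~\ref{Lemma2.1}, the same splitting $V(t_n)-V^n=J_1+J_2$ with $J_1$ controlled by the kernel $K(z)$ of Lemma~\ref{Lemma3.6} together with \eqref{eq3.3}, and the same tail estimate for $J_2$ using the resolvent bound and $1\leq(\tfrac{\sin\theta}{\pi})^{k}\tau^{k}r^{k}$ on $\Gamma_{\theta,\kappa}\setminus\Gamma^{\tau}_{\theta,\kappa}$. The bookkeeping you flag (matching the $F$-part of $G(\xi)$, including the $\tau$ prefactor from Lemma~\ref{Lemma2.1}, with $G_1$ in \eqref{eq4.2}) is exactly the identification the paper makes implicitly, so there is no gap.
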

\begin{proof}
From \eqref{LT} and Lemma \ref{Lemma2.1}, we have
\begin{equation*}
V(t_{n})=\frac{1}{2\pi i}\int_{\Gamma_{\theta,\kappa}}{e^{zt_{n}}(z^{\alpha}-A)^{-1} z^{m} z^{-l-1} \partial_{t}^{l-m} f(0) }dz,~~l=0,1,2, \ldots, k+m-1,
\end{equation*}
and
\begin{equation*}
V^{n}=  \frac{1}{2\pi i}\int_{\Gamma^{\tau}_{\theta,\kappa}} e^{zt_n} \left( \frac{{\rho_{w}} (\xi)}{\beta + (1-\beta)e^{-z\tau}} - A\right)^{-1}   \frac{G_{1} (e^{-z\tau})}{\beta +  (1 -  \beta)e^{-z\tau}}  \partial_{t}^{l-m} f(0) dz,
\end{equation*}
where $G_{1}(\xi)$ is given in \eqref{eq4.2} 
and $\gamma_{l}(\xi)=\sum^{\infty}_{n=0}n^{l} \xi^{n}$.
Then we have
\begin{equation*}
V(t_{n})-V^{n}=J_{1} + J_{2},
\end{equation*}
where
\begin{equation*}
\begin{split}
J_{1} &
= \frac{1}{2\pi i}\int_{\Gamma^{\tau}_{\theta,\kappa}} -e^{zt_{n}} K(z)  \partial_{t}^{l-m} f(0) dz, \\
J_{2} & =\frac{1}{2\pi i}\int_{\Gamma_{\theta,\kappa}\setminus\Gamma^{\tau}_{\theta,\kappa}}{e^{zt_{n}}\left(z - A\right)^{-1}  \frac{z^{m}}{z^{j+1}} \partial_{t}^{l-m} f(0) }dz
\end{split}
\end{equation*}
with $K(z)$ in Lemma \ref{Lemma3.6}.
Hence, using \eqref{resolvent estimate}, \eqref{eq3.3} and Lemma \ref{Lemma3.6}, we have
\begin{equation*}
\begin{split}
\| J_1 \|_{L^2(\Omega)}
&  \leq c  \int^{\frac{\pi}{\tau\sin\theta}}_{\kappa} e^{rt_{n}\cos\theta} \left(\tau^{l+1}  r^{m-1} +  \tau^{k} r^{k+m-l-2} \right) dr \left\| \partial_{t}^{l-m} f(0) \right\|_{L^2(\Omega)} \\
& \quad + c \int^{\theta}_{-\theta}e^{\kappa t_{n} \cos\psi} \left(\tau^{l+1}  \kappa^{m} +  \tau^{k} \kappa^{k+m-l-1} \right) d\psi  \left\| \partial_{t}^{l-m} f(0) \right\|_{L^2(\Omega)} \\
&  \leq  \left( c\tau^{l+1}t_{n}^{-m} + c\tau^{k}t_{n}^{l+1-k-m} \right) \left\| \partial_{t}^{l-m} f(0) \right\|_{L^2(\Omega)}. 
\end{split}
\end{equation*}
On the other hand, using \eqref{resolvent estimate}, it leads to
\begin{equation*}
\begin{split}
\|J_2 \|_{L^2(\Omega)}
&\leq c  \int^{\infty}_{\frac{\pi}{\tau\sin\theta}} e^{rt_{n}\cos\theta}r^{m-l-2}dr \left\| \partial_{t}^{l-m} f(0) \right\|_{L^2(\Omega)}\\
&\leq c\tau^{k}  \int^{\infty}_{\frac{\pi}{\tau\sin\theta}} e^{rt_{n}\cos\theta}r^{k+m-l-2}dr  \left\| \partial_{t}^{l-m} f(0) \right\|_{L^2(\Omega)}\\
&\leq  c\tau^{k}t_{n}^{l+1-k-m} \left\| \partial_{t}^{l-m} f(0) \right\|_{L^2(\Omega)}.
\end{split}
\end{equation*}
Here we use $1\leq (\frac{\sin \theta}{\pi})^{k} \tau^{k} r^{k}$ with $r\geq \frac{\pi}{\tau \sin \theta}$.
The proof is completed.
\end{proof}

\begin{lemma}\label{lemma3.10}
Let $V(t_{n})$ and $V^{n}$ be the solutions of \eqref{RPQ} and \eqref{IDWSBDF}, respectively.
Let $v=0$, $F(t): = \frac{t^{k+m-1}}{\Gamma(k+m)} \ast \partial_{t}^{k} f(t)$, $1 \leq  m \leq k \leq  7$, and $\int_{0}^{t} \| \partial_{t}^{k} f(s) \|_{L^2(\Omega)} ds  < \infty$. Then the following error estimate holds for any $t_n>0$:
\begin{equation*}
\left\|V(t_{n})-V^{n}\right\|_{L^2(\Omega)} \leq c\tau^{k} \int_{0}^{t_{n}}  \left\|\partial_{s}^{k} f(s) \right\|_{L^2(\Omega)}ds.
\end{equation*}
\end{lemma}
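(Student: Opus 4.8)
The plan is to carry the argument of Lemma~\ref{lemma3.9} over to the remainder term $F(t)=\tfrac{t^{k+m-1}}{\Gamma(k+m)}\ast\partial_t^kf(t)$ of \eqref{gs3.3}, trading the pointwise‑in‑time factors of the earlier lemmas for a time integral of $\|\partial_s^kf(s)\|_{L^2(\Omega)}$ by exploiting linearity in $F$. Put $\phi:=\partial_t^kf$, so that $\widehat F(z)=z^{-(k+m)}\widehat\phi(z)$ and hence $z^m\widehat F(z)=z^{-k}\widehat\phi(z)$; this is exactly the symbol $z^m/z^{l+1}$ occurring in Lemmas~\ref{Lemma3.6} and~\ref{lemma3.9} for the largest admissible index $l=k+m-1$. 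By \eqref{LT} with $v=0$,
\begin{equation*}
V(t_n)=\frac{1}{2\pi i}\int_{\Gamma_{\theta,\kappa}}e^{zt_n}(z-A)^{-1}z^{-k}\widehat\phi(z)\,dz,
\end{equation*}
while Lemma~\ref{Lemma2.1} with $v=0$ expresses $V^n$ through $\widetilde F(\xi)$ and the starting values $F^0,\dots,F^{m-1}$, all linear in $\phi$. Since $F=g\ast\phi$ with $g(t)=\tfrac{t^{k+m-1}}{\Gamma(k+m)}$, both $V(t_n)$ and $V^n$ are superpositions over $s\in[0,t_n]$, with weight $\phi(s)$, of the responses to the single input $F_s(t):=\tfrac{(t-s)_+^{k+m-1}}{\Gamma(k+m)}$; writing $s=t_{p-1}+\mu\tau$, $\mu\in(0,1]$, and expanding $(t_j-s)_+^{k+m-1}$ in powers of $\mu$, the generating function of $\{F_s(t_j)\}$ is $e^{-zt_{p-1}}$ times a finite combination, with coefficients $O(\tau^{k+m-1})$, of the functions $\gamma_r(\xi)$, $r=0,\dots,k+m-1$, so that after substitution the discrete response to $F_s$ is governed by the kernels $K(z)$ of Lemma~\ref{Lemma3.6} (for $l\le k+m-1$) multiplied by the shift factor $e^{-zt_{p-1}}$.

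\textbf{Key steps.} First split $\phi=\phi\mathbf 1_{[0,t_n-\tau]}+\phi\mathbf 1_{(t_n-\tau,t_n)}$ (the part of $\phi$ on $(t_n,t_{n+m-1}]$ that enters $V^n$ through the shift being treated in the same way). For the near‑diagonal piece, the inputs $F_s$ with $s>t_n-\tau$ vanish at $t_1,\dots,t_{n-1}$ and are $O(\tau^{k+m-1})$ at the finitely many remaining nodes, so ${}_b\partial_\tau^mF^n,\ {}_s\partial_\tau^mF^{n+m-1}=O(\tau^{k-1})$; combining this with the resolvent bound $\|(\rho_w(e^{-z\tau})/(\beta+(1-\beta)e^{-z\tau})-A)^{-1}\|\le c|z|^{-1}$ of Lemma~\ref{Lemma3.2} and the uniform boundedness of the discrete solution operator (a consequence of $A(\vartheta_k)$‑stability) gives $\|V^n[\phi\mathbf 1_{(t_n-\tau,t_n)}]\|_{L^2(\Omega)}\le c\tau^k\!\int_{t_n-\tau}^{t_n}\|\phi(s)\|_{L^2(\Omega)}\,ds$, and likewise $\|V(t_n)[\phi\mathbf 1_{(t_n-\tau,t_n)}]\|_{L^2(\Omega)}\le c\tau^k\!\int_{t_n-\tau}^{t_n}\|\phi(s)\|_{L^2(\Omega)}\,ds$ using $\|\int_\sigma^{t_n}e^{(t_n-\rho)A}\tfrac{(\rho-\sigma)^{k-1}}{\Gamma(k)}\,d\rho\|\le c\tau^k$. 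For the main piece $\phi\mathbf 1_{[0,t_n-\tau]}$, proceed as in Lemma~\ref{lemma3.9}: write $V(t_n)-V^n=J_1+J_2$ with $J_1$ over $\Gamma^\tau_{\theta,\kappa}$ carrying the kernels $K(z)$ against $e^{zt_n}\widehat\phi(z)$ and $J_2$ the tail over $\Gamma_{\theta,\kappa}\setminus\Gamma^\tau_{\theta,\kappa}$. The decisive ingredient is the elementary estimate
\begin{equation*}
\bigl|e^{zt_n}\widehat\phi(z)\bigr|=\Bigl|\int_0^{t_n}e^{z(t_n-s)}\phi(s)\,ds\Bigr|\le\int_0^{t_n}e^{\,r\cos\theta\,(t_n-s)}\|\phi(s)\|_{L^2(\Omega)}\,ds,\qquad z=re^{\pm i\theta}\in\Gamma_\theta,
\end{equation*}
valid because $\cos\theta<0$ and $t_n-s\ge0$; inserting it, using $\|K(z)\|\le c\tau^{l+1}|z|^{m-1}+c\tau^k|z|^{k+m-l-2}$ from Lemma~\ref{Lemma3.6}, and applying Fubini turns $J_1$ into $\int_0^{t_n-\tau}\|\phi(s)\|_{L^2(\Omega)}\bigl(\int_{\Gamma^\tau_{\theta,\kappa}}\|K(z)\|e^{r\cos\theta(t_n-s)}|dz|\bigr)ds$, whose inner integral is $\le c\tau^k$ once $t_n-s\ge\tau$ is used (so that $\tau^{k+m}(t_n-s)^{-m}\le\tau^k$), exactly as in \eqref{eq3.3}. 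The tail $J_2$ is bounded by $c\tau^k\int_0^{t_n}\|\phi(s)\|_{L^2(\Omega)}\,ds$ as in Theorem~\ref{Theorem3.2}, via $1\le(\tau\sin\theta/\pi)^k|z|^k$ and $\|(z-A)^{-1}z^{-k}\|\le c|z|^{-k-1}$. Adding all contributions yields the claim.

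\textbf{Main obstacle.} The hard part is obtaining the estimate \emph{uniformly in the shift} $s$, and in particular near the diagonal $s\approx t_n$, where the pointwise bounds of Lemmas~\ref{Lemma3.6}--\ref{lemma3.9} degenerate; this is why the contour representation must be replaced by the short‑support/stability argument on $(t_n-\tau,t_n)$, which is the step that actually converts the time‑local estimates of Sections~\ref{Sec4}--\ref{Sec5} into the time‑integral bound asserted here. A secondary, purely technical, nuisance is that $s$ need not be a grid point, handled by the $\mu$‑expansion that reduces the non‑grid case to the finitely many monomial kernels $\gamma_r$ already controlled in Lemma~\ref{Lemma3.6}.
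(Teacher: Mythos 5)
Your proposal reaches the right conclusion by the same underlying mechanism as the paper --- a uniform $O(\tau^{k})$ bound on the error response to the monomial kernel $t^{k+m-1}/\Gamma(k+m)$, followed by superposition against $\partial_t^k f$ --- but it is organized quite differently. The paper stays entirely in the time domain: it introduces the discrete solution operators ${_{b}\mathscr{E}^{n}_{\tau}}$, ${_{s}\mathscr{E}^{n}_{\tau}}$, proves $\|{_{b}\mathscr{E}^{n}_{\tau}}\|,\|{_{s}\mathscr{E}^{n}_{\tau}}\|\leq c\tau t_n^{-m}$ by contour integration, writes $V^n-V(t_n)$ as $\bigl(({_{b}\mathscr{E}_{\tau}}+{_{s}\mathscr{E}_{\tau}}-\mathscr{E})\ast\frac{t^{k+m-1}}{\Gamma(k+m)}\bigr)\ast\partial_t^k f$ evaluated at $t_n$, and then proves $\bigl\|\bigl(({_{b}\mathscr{E}_{\tau}}+{_{s}\mathscr{E}_{\tau}}-\mathscr{E})\ast\frac{t^{k+m-1}}{\Gamma(k+m)}\bigr)(t)\bigr\|\leq c\tau^k$ for \emph{all} $t\in(t_{n-1},t_n)$ by Taylor-expanding the convolution at the grid point $t_n$, reusing the grid-point estimates of Lemma~\ref{lemma3.9} for the polynomial terms and the operator bounds for the remainder integrals. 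You instead dualize: you fix the source time $s$, expand the shifted monomial $(t-s)_+^{k+m-1}$ in the offset $\mu$, and work in the frequency domain with the kernels $K(z)$ of Lemma~\ref{Lemma3.6} plus Fubini. Your off-grid $\mu$-expansion plays exactly the role of the paper's Taylor expansion at $t_n$, and your near-diagonal stability argument plays the role of the paper's remainder bounds $\|\int_{t_n}^{t}(t-s)^{k+m-1}{_{b}\mathscr{E}_{\tau}}(s)\,ds\|\leq c\tau^k$. What the paper's route buys is that everything reduces to quantities already controlled at grid points, with no need to manipulate $\widehat{\phi}$ on the contour; what your route buys is a more transparent view of where the time integral of $\|\partial_s^k f\|$ comes from.

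Two soft spots you should repair. First, the bound $|e^{zt_n}\widehat{\phi}(z)|\leq\int_0^{t_n}e^{r\cos\theta\,(t_n-s)}\|\phi(s)\|\,ds$ presumes both that $\widehat{\phi}(z)$ exists on all of $\Gamma^{\tau}_{\theta,\kappa}$ (on the arc $\Gamma_\kappa$ one has $\Re z<0$, so the Laplace integral over $(0,\infty)$ need not converge) and that the integral may be truncated at $t_n$; you need an explicit causality/truncation step replacing $\widehat{\phi}$ by $\int_0^{t_n}e^{-zs}\phi(s)\,ds$, and you must also account for the fact that the shifted operator ${_{s}\partial^{m}_{\tau}}$ feeds $F$ at times up to $t_{n+m-1}>t_n$ into $V^n$ (you mention this only parenthetically). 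Second, the "uniform boundedness of the discrete solution operator as a consequence of $A(\vartheta_k)$-stability" is not a result established in the paper; the estimate you need there should be derived, as the paper does, from the contour representation and \eqref{discrete fractional resolvent estimate}. Neither issue is fatal, but as written the proof is not self-contained at these two points.
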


\begin{proof}
According to \eqref{LT}, 
we have
\begin{equation}\label{nas3.6}
V(t_{n}) = \left( \mathscr{E}(t)\ast F(t) \right)(t_{n})
= \left(\left(\mathscr{E}(t)\ast \frac{t^{k+m-1}}{\Gamma(k+m)} \right) \ast  \partial_{t}^{k} f(t) \right)(t_{n})
\end{equation}
with
\begin{equation}\label{nas3.007}
\mathscr{E}(t)= \frac{1}{2\pi i} \int_{\Gamma_{\theta,\kappa}} e^{zt}(z - A)^{-1} z^{m} dz.
\end{equation}

From \eqref{eq2.11}, we denote
\begin{equation*}
\sum^{\infty}_{n=0} {_{b}\mathscr{E}^{n}_{\tau}}\xi^{n}
= \widetilde{{_{b}\mathscr{E_{\tau}}}}(\xi)
:=\left( \frac{{\rho_{w}} (\xi)}{\beta + (1-\beta)\xi} - A\right)^{-1} \left(\beta + (1-\beta)\xi\right)^{-1} \beta {\rho^{m}_{b}}(\xi),
\end{equation*}
and
\begin{equation*}
\sum^{\infty}_{n=0} {_{s}\mathscr{E}^{n}_{\tau}} \xi^{n}
=\widetilde{{_{s}\mathscr{E_{\tau}}}}(\xi)
:=\left( \frac{{\rho_{w}} (\xi)}{\beta + (1-\beta)\xi} - A\right)^{-1} \left(\beta + (1-\beta)\xi\right)^{-1} (1-\beta) {\rho^{m}_{s}}(\xi) \xi.
\end{equation*}
Moreover, this yields
\begin{equation*}
\begin{split}
\widetilde{V}(\xi)
& = \widetilde{{_{b}\mathscr{E_{\tau}}}}(\xi) \sum^{\infty}_{j=0} F^{j} \xi^{j} + \widetilde{{_{s}\mathscr{E}_{\tau}}}(\xi) \sum^{\infty}_{j=0} F^{j+m} \xi^{j}
 = \sum^{\infty}_{n=0} \sum^{\infty}_{j=0} \left( {_{b}\mathscr{E}^{n}_{\tau}} F^{j} +  {_{s}\mathscr{E}^{n}_{\tau}} F^{j+m}\right) \xi^{n+j} \\
& =\sum^{\infty}_{j=0} \sum^{\infty}_{n=j} \left( {_{b}\mathscr{E}^{n-j}_{\tau}} F^{j}  +  {_{s}\mathscr{E}^{n-j}_{\tau}} F^{j+m} \right) \xi^{n}
 =\sum^{\infty}_{n=0} \sum^{n}_{j=0} \left( {_{b}\mathscr{E}^{n-j}_{\tau}} F^{j} + {_{s}\mathscr{E}^{n-j}_{\tau}} F^{j+m} \right) \xi^{n} \\
& 
=\sum^{\infty}_{n=0} V^{n} \xi^{n}
\end{split}
\end{equation*}
with
\begin{equation}\label{addv1}
V^{n}=\sum^{n}_{j=0} \left( {_{b}\mathscr{E}^{n-j}_{\tau}} F^{j} + {_{s}\mathscr{E}^{n-j}_{\tau}} F^{j+m} \right)
=\sum^{n}_{j=0} \left( {_{b}\mathscr{E}^{n-j}_{\tau}} F(t_{j}) + {_{s}\mathscr{E}^{n-j}_{\tau}} F(t_{j+m}) \right).
\end{equation}
			
Using Cauchy's integral formula, the change of variables $\xi=e^{-z\tau}$ and Cauchy's theorem, we get the representation of the ${_{b}\mathscr{E}^{n}_{\tau}}$ and ${_{s}\mathscr{E}^{n}_{\tau}}$ respectively as follows: 
\begin{equation*}
\begin{split}
{_{b}\mathscr{E}^{n}_{\tau}}
& =\frac{1}{2\pi i}\int_{|\xi|=\varrho}{\xi^{-n-1} \widetilde{{_{b}\mathscr{E_{\tau}}}} (\xi)}d\xi
=\frac{\tau}{2\pi i}\int_{\Gamma^{\tau}}{e^{zt_n} \widetilde{{_{b}\mathscr{E_{\tau}}}} (e^{-z\tau})}dz\\
& =\frac{\tau}{2\pi i}\int_{\Gamma^{\tau}_{\theta,\kappa}} {e^{zt_n} \left( \frac{{\rho_{w}} (e^{-z\tau})}{\beta + (1-\beta)e^{-z\tau}} - A\right)^{-1} \left(\beta + (1-\beta) e^{-z\tau} \right)^{-1} \beta {\rho^{m}_{b}}(e^{-z\tau}) }dz,
\end{split}
\end{equation*}
and
\begin{equation*}
\begin{split}
{_{s}\mathscr{E}^{n}_{\tau}}
& =\frac{1}{2\pi i}\int_{|\xi|=\varrho}{\xi^{-n-1} \widetilde{{_{s}\mathscr{E_{\tau}}}}(\xi)}d\xi =\frac{\tau}{2\pi i}\int_{\Gamma^{\tau}}{e^{zt_n} \widetilde{{_{s}\mathscr{E_{\tau}}}} (e^{-z\tau})}dz\\
& =\frac{\tau}{2\pi i}\int_{\Gamma^{\tau}_{\theta,\kappa}} {e^{zt_n} \left( \frac{{\rho_{w}} (e^{-z\tau})}{\beta + (1-\beta) e^{-z\tau}} - A\right)^{-1} \frac{(1-\beta) {\rho^{m}_{s}}(e^{-z\tau}) e^{-z\tau}}{\beta + (1-\beta)e^{-z\tau}}   }dz,
\end{split}
\end{equation*}
where $\varrho\in(0,1)$ and $\Gamma^{\tau} := \{ z = -\ln(\varrho)/\tau + i y: y\in \mathbb{R} ~{\rm and}~ |y|\leq \pi/\tau  \}$ in \cite{Jin2017Correction}.

According to  \eqref{discrete fractional resolvent estimate} and Lemmas \ref{Lemma3.1}, \ref{Lemma3.2}, 
this implies
\begin{equation}\label{3.0002}
\left\| {_{b}\mathscr{E}^{n}_{\tau}} \right\| \leq c \tau \left( \int^{\frac{\pi}{\tau\sin\theta}}_{\kappa} e^{rt_{n}\cos\theta} r^{m-1}dr +\int^{\theta}_{-\theta}e^{\kappa t_{n}\cos\psi} \kappa^{m}  d\psi\right)
\leq c \tau t_{n}^{-m}, 
\end{equation}
and
\begin{equation}\label{eq3.9}
\left\| {_{s}\mathscr{E}^{n}_{\tau}} \right\| \leq c \tau \left( \int^{\frac{\pi}{\tau\sin\theta}}_{\kappa} e^{rt_{n}\cos\theta} r^{m-1}dr +\int^{\theta}_{-\theta}e^{\kappa t_{n}\cos\psi} \kappa^{m}  d\psi\right) \leq c \tau t_{n}^{-m}.
\end{equation}

Let $ {_{b}\mathscr{E}_{\tau}}(t):=\sum^{\infty}_{n=0} {_{b}\mathscr{E}^{n}_{\tau}} \delta_{t_{n}}(t)$ and $ {_{s}\mathscr{E}_{\tau}}(t):=\sum^{\infty}_{n=0} {_{s}\mathscr{E}^{n}_{\tau}} \delta_{t_{n}}(t+t_{m})$, with $\delta_{t_{n}}$ being the Dirac delta function at $t_{n}$.
Then we obtain
\begin{equation}\label{nas3.8}
\begin{split}
& ( {_{b}\mathscr{E}_{\tau}}(t)\ast F(t))(t_{n}) + ( {_{s}\mathscr{E}_{\tau}} (t)\ast F(t))(t_{n}) \\
& = \left(\sum^{\infty}_{j=0} {_{b}\mathscr{E}^{j}_{\tau}} \delta_{t_{j}}(t) \ast F(t) \right)(t_{n}) + \left(\sum^{\infty}_{j=0} {_{s}\mathscr{E}^{j}_{\tau}} \delta_{t_{j}}(t+t_{m}) \ast F(t) \right)(t_{n}) \\
& = \sum^{n}_{j=0} {_{b}\mathscr{E}^{j}_{\tau}} F(t_{n}-t_{j}) + \sum^{n}_{j=0} {_{s}\mathscr{E}^{j}_{\tau}} F(t_{n}+t_{m}-t_{j})  \\
& = \sum^{n}_{j=0} {_{b}\mathscr{E}^{n-j}_{\tau}} F(t_{j}) + \sum^{n}_{j=0} {_{s}\mathscr{E}^{n-j}_{\tau}} F(t_{j}+t_{m}) = V^{n},
\end{split}
\end{equation}
where $V^n$ is given in \eqref{addv1}.
In particular, from \eqref{nas3.8}, there exist
\begin{equation*}
\begin{split}
\widetilde{({_{b}\mathscr{E}_{\tau}}\ast t^{l})}(\xi)
& = \sum^{\infty}_{n=0} \sum^{n}_{j=0} {_{b}\mathscr{E}^{n-j}_{\tau}} t^{l}_{j}\xi^{n}  =\sum^{\infty}_{j=0} \sum^{\infty}_{n=j} {_{b}\mathscr{E}^{n-j}_{\tau}} t^{l}_{j}\xi^{n}
  = \sum^{\infty}_{j=0} \sum^{\infty}_{n=0} {_{b}\mathscr{E}^{n}_{\tau}} t^{l}_{j}\xi^{n+j}\\
& = \sum^{\infty}_{n=0} {_{b}\mathscr{E}^{n}_{\tau}} \xi^{n}\sum^{\infty}_{j=0}t^{l}_{j}\xi^{j}  =\widetilde{{_{b}\mathscr{E}_{\tau}}}(\xi) \tau^{l} \sum^{\infty}_{j=0}j^{l}\xi^{j}
  = \widetilde{{_{b}\mathscr{E}_{\tau}}}(\xi) \tau^{l} \gamma_{l}(\xi)
\end{split}
\end{equation*}
with $\gamma_{l}(\xi)=\sum_{n=0}^{\infty} n^{l} \xi^{n}$, $0 \leq l \leq k+m-1$, and
\begin{equation*}
\begin{split}
\widetilde{({_{s}\mathscr{E}_{\tau}}\ast t^{l})}(\xi)
& = \sum^{\infty}_{n=0} \sum^{n}_{j=0} {_{s}\mathscr{E}^{n-j}_{\tau}} t^{l}_{j+m}\xi^{n}  =\sum^{\infty}_{j=0} \sum^{\infty}_{n=j} {_{s}\mathscr{E}^{n-j}_{\tau}} t^{l}_{j+m}\xi^{n} \\
& = \sum^{\infty}_{j=0} \sum^{\infty}_{n=0} {_{s}\mathscr{E}^{n}_{\tau}} t^{l}_{j+m}\xi^{n+j} = \sum^{\infty}_{n=0} {_{s}\mathscr{E}^{n}_{\tau}} \xi^{n}\sum^{\infty}_{j=0}t^{l}_{j+m}\xi^{j} \\
& = \widetilde{{_{s}\mathscr{E_{\tau}}}}(\xi) \tau^{l} \sum^{\infty}_{j=0}(j+m)^{l}\xi^{j}
  = \widetilde{{_{s}\mathscr{E}_{\tau}}}(\xi) \tau^{l} \xi^{-m} \left( \gamma_{l}(\xi) - \sum^{m-1}_{j=0} j^{l}\xi^{j}\right).
\end{split}
\end{equation*}
			
From  \eqref{nas3.6}, \eqref{nas3.8}, and Lemma \ref{lemma3.9}, we have
\begin{equation}\label{aadd3.8}
\left\| \left( \left( {_{b}\mathscr{E}_{\tau}} + {_{s}\mathscr{E}_{\tau}}  - \mathscr{E}  \right) \ast \frac{t^{l}}{\Gamma(l+1)} \right)(t_n)  \right\|
\leq
c\tau^{l+1}t_{n}^{-m} + c\tau^{k}t_{n}^{l+1-k-m}
\end{equation}
with $0 \leq l \leq k+m-1$.

Next, we will prove the following inequality \eqref{3.0003} for any $t>0$
\begin{equation}\label{3.0003}
\left\|\left( \left( {_{b}\mathscr{E}_{\tau}} + {_{s}\mathscr{E}_{\tau}}  -  \mathscr{E}  \right) \ast \frac{t^{k+m-1}}{\Gamma(k+m)} \right)(t)\right\| \leq c\tau^{k} \quad \forall t\in (t_{n-1},t_{n}).
\end{equation}
Applying Taylor series expansion of $\mathscr{E}(t)$ at $t=t_{n}$, it yields
\begin{equation*}
\begin{split}
\left( \mathscr{E} \ast \frac{t^{k+m-1}}{\Gamma(k+m)} \right)(t)
&= \sum_{l=0}^{k+m-1}   \frac{(t-t_{n})^{l}}{\Gamma(l+1)} \left( \mathscr{E} \ast \frac{t^{k+m-l-1}}{\Gamma(k+m-l)} \right)(t_{n})\\
&\quad +\frac{1}{\Gamma(k+m)} \int^{t}_{t_{n}}(t-s)^{k+m-1} \mathscr{E}(s)ds,
\end{split}
\end{equation*}
which also holds for $\left( {_{b}\mathscr{E}_{\tau}} \ast t^{k+m-1} \right)(t) $ and $\left( {_{s}\mathscr{E}_{\tau}} \ast t^{k+m-1} \right)(t) $.

From \eqref{aadd3.8} and the above equation, for $t\in (t_{n-1},t_n)$, one has
\begin{equation*}
\left\|(t-t_{n})^{l} \left(\left( {_{b}\mathscr{E}_{\tau}} + {_{s}\mathscr{E}_{\tau}} -  \mathscr{E}  \right) \ast t^{k+m-l-1} \right)(t_n)  \right\|  \leq c\tau^l \left( \tau^{k+m-l}t_{n}^{-m} + \tau^{k}t_{n}^{-l}  \right)
 \leq  c \tau^{k}.
\end{equation*}
Using  \eqref{nas3.007}, \eqref{resolvent estimate}, and \eqref{eq3.3}, it leads to
\begin{equation*}
\| \mathscr{E}(t) \| \leq c \left( \int^{\infty}_{\kappa}e^{rt\cos\theta} r^{m-1} dr + \int^{\theta}_{-\theta}e^{\kappa t \cos\psi}\kappa^{m} d\psi \right) \leq c t^{-m},
\end{equation*}
and
\begin{equation*}
\left\| \int^{t}_{t_{n}}(t-s)^{k+m-1} \mathscr{E}(s)ds \right\| \leq c \int^{t_{n}}_{t}(s-t)^{k+m-1} s^{-m}ds \leq  c \tau^{k}.
\end{equation*}

On the other hand, using the estimates of $ {_{b}\mathscr{E}_{\tau}}(t)=\sum^{\infty}_{n=0}{_{b}\mathscr{E}^{n}_{\tau}}\delta_{t_{n}}(t)$ and $ {_{s}\mathscr{E}_{\tau}}(t)=\sum^{\infty}_{n=0} {_{s}\mathscr{E}^{n}_{\tau}} \delta_{t_{n}}(t+t_{m})$ in \eqref{3.0002} and \eqref{eq3.9}, respectively, for any $t \in   (t_{n-1},t_{n})$, we deduce
\begin{equation*}
\left\| \int^{t}_{t_{n}}(t-s)^{k+m-1} {_{b}\mathscr{E}_{\tau}}(s)ds \right\|
 \leq  (t_n-t)^{k+m-1}  \| {_{b}\mathscr{E}^{n}_{\tau}} \|  \leq c \tau^{k+m} t_{n}^{-m}
 \leq c \tau^{k},
\end{equation*}
and
\begin{equation*}
\left\| \int^{t}_{t_{n}}(t-s)^{k+m-1} {_{s}\mathscr{E}_{\tau}}(s)ds \right\|
 \leq  (t_n-t)^{k+m-1}  \left\| {_{s}\mathscr{E}^{n}_{\tau}} \right\|  \leq c \tau^{k+m} t_{n}^{-m}
 \leq c \tau^{k}.
\end{equation*}
Then we obtain \eqref{3.0003}.
From \eqref{nas3.6},  \eqref{nas3.8} and \eqref{3.0003}, we have
\begin{equation*}
\begin{split}
\left\|V(t_{n})-V^{n}\right\|_{L^2(\Omega)}
& = \left\| \left(\left( \left( {_{b}\mathscr{E}_{\tau}} + {_{s}\mathscr{E}_{\tau}}  -  \mathscr{E}  \right) \ast \frac{t^{k+m-1}}{\Gamma(k+m)} \right) \ast \partial_{t}^{k} f(t)  \right) (t_{n})\right\|_{L^2(\Omega)} \\
& \leq c\tau^{k} \int_{0}^{t_{n}}  \left\|\partial_{s}^{k} f(s) \right\|_{L^2(\Omega)}ds.
\end{split}
\end{equation*}
The proof is completed.
\end{proof}

\begin{theorem}\label{Theorem3.3}
Let $V(t_{n})$ and $V^{n}$ be the solutions of \eqref{RPQ} and \eqref{IDWSBDF}, respectively.
Let $v\in L^{2}(\Omega)$, $f\in C^{k-1}([0,T]; L^{2}(\Omega))$, and $\int_{0}^{t} \left\| \partial_{t}^{k} f(s) \right\|_{L^2(\Omega)} ds <\infty$.
Then the following error estimate holds for any $t_n>0$:
\begin{equation*}
\begin{split}
\left\|V^{n}-V(t_{n})\right\|_{L^2(\Omega)}
& \leq \left( c \tau^{m+1} t_{n}^{-m-1} + c\tau^{k} t_{n}^{-k} \right) \left\| v \right\|_{L^2(\Omega)} \\ 
& \quad + \sum_{l=0}^{k-1}  \left( c\tau^{l+m+1}t_{n}^{-m} + c\tau^{k}t_{n}^{l+1-k} \right) \left\| \partial_{t}^{l} f(0) \right\|_{L^2(\Omega)}  \\
& \quad + c\tau^{k}  \int_{0}^{t_{n}}   \left\| \partial_{t}^{k} f(s) \right\|_{L^2(\Omega)}  ds
\end{split}
\end{equation*}
{ with $1 \leq  m \leq k \leq  7$}.
\end{theorem}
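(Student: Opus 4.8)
The plan is to reduce Theorem~\ref{Theorem3.3} to the three building blocks already established: the homogeneous estimate of Theorem~\ref{Theorem3.2}, the polynomial-source estimate of Lemma~\ref{lemma3.9}, and the remainder estimate of Lemma~\ref{lemma3.10}. The key structural observation is that both the continuous problem \eqref{RPQ} and the corrected scheme \eqref{IDWSBDF} are \emph{linear} in the data pair $(v,F)$; indeed, in the solution representation of Lemma~\ref{Lemma2.1} the generating function $G(\xi)$ depends linearly on $Av$ and on $\widetilde F(\xi)$. Consequently the error $V^n - V(t_n)$ decomposes additively along any decomposition of $(v,F)$, and it suffices to estimate each piece and combine by the triangle inequality.

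First I would split off the initial data: taking $(v,0)$ in place of $(v,F)$ and invoking Theorem~\ref{Theorem3.2} yields the contribution $\bigl(c\tau^{m+1}t_n^{-m-1} + c\tau^{k}t_n^{-k}\bigr)\|v\|_{L^2(\Omega)}$. Next, with $v=0$, I would decompose $F$ by the Taylor identity \eqref{gs3.3},
\begin{equation*}
F(t) = \sum_{l=0}^{k+m-1} \frac{t^l}{\Gamma(l+1)} \partial_t^{l-m} f(0) + \frac{t^{k+m-1}}{\Gamma(k+m)} \ast \partial_t^{k} f(t),
\end{equation*}
whose validity in $L^2(\Omega)$ is exactly guaranteed by the hypotheses $f\in C^{k-1}([0,T];L^2(\Omega))$ and $\int_0^{t}\|\partial_t^{k}f(s)\|_{L^2(\Omega)}\,ds<\infty$. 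Since $\partial_t^{l-m}f(0)=J^{m-l}f(0)=0$ whenever $0\le l\le m-1$, the finite sum reduces to the terms with $l=m,\dots,k+m-1$; writing $l=j+m$ with $0\le j\le k-1$, each surviving term equals $\frac{t^{j+m}}{\Gamma(j+m+1)}\partial_t^{j}f(0)$. Applying Lemma~\ref{lemma3.9} to each such term (with its parameter ``$l$'' taken to be $j+m$, so that ``$l-m$'' is $j$) gives the contribution $\bigl(c\tau^{j+m+1}t_n^{-m} + c\tau^{k}t_n^{\,j+1-k}\bigr)\|\partial_t^{j}f(0)\|_{L^2(\Omega)}$; summing over $j$ and renaming $j$ as $l$ produces precisely the second line of the asserted bound. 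Finally, applying Lemma~\ref{lemma3.10} to the integral remainder $\frac{t^{k+m-1}}{\Gamma(k+m)}\ast\partial_t^{k}f(t)$ yields the last term $c\tau^{k}\int_0^{t_n}\|\partial_s^{k}f(s)\|_{L^2(\Omega)}\,ds$. Adding the three contributions via the triangle inequality (and using $\|V^n-V(t_n)\|=\|V(t_n)-V^n\|$) completes the proof.

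The argument is essentially a bookkeeping superposition once the lemmas are available; the only points that require a little care are the index shift $l\leftrightarrow j+m$ when matching Lemma~\ref{lemma3.9} to the Taylor coefficients of $F$, and the remark that the low-order coefficients $\partial_t^{l-m}f(0)$ with $l<m$ vanish, so that no singularity worse than $t_n^{-m}$ is introduced by the source part (the genuinely stronger singularity $t_n^{-m-1}$ comes solely from the initial layer of $v$). I do not anticipate a real obstacle: the substantive analytic work—the resolvent-type kernel estimates of Lemmas~\ref{Lemma3.1}--\ref{Lemma3.7} and the contour integration in Theorem~\ref{Theorem3.2} and Lemmas~\ref{lemma3.9}--\ref{lemma3.10}—has already been carried out.
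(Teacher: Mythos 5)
Your proposal is correct and follows exactly the paper's route: the paper's own proof is a one-line appeal to Theorem \ref{Theorem3.2} and Lemmas \ref{lemma3.9} and \ref{lemma3.10}, combined by linearity and the Taylor decomposition \eqref{gs3.3} just as you describe. Your added bookkeeping (the vanishing of $\partial_t^{l-m}f(0)$ for $l<m$ and the reindexing $l=j+m$ that turns the Lemma \ref{lemma3.9} bound into the second line of the theorem) correctly fills in the details the paper leaves implicit.
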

\begin{proof}
According to Theorem \ref{Theorem3.2} and Lemmas \ref{lemma3.9}, \ref{lemma3.10}, the desired result is obtained.
\end{proof}

{
\begin{remark}
From Theorem \ref{Theorem3.3}, we know that the estimate is nothing but $O(\tau)$ if $f$ is merely continuous. On the other extreme, the error is $O(\tau^7)$ when $f$ is of class $C^6$ and $\partial_{t}^{l} f(0) = 0$ for $0 \leq l \leq 6$, with $v=0$. This demonstrates the (parabolic) optimality of the result in Theorem \ref{Theorem3.3}.
Note that $m=k-1$ ($1\leq m \leq k\leq 7$) is a recommended choice to maintain optimal accuracy depending on the smoothness of $f$.
\end{remark}

}

\begin{remark}
	From Theorem \ref{Theorem3.3}, for the model $(*)$,  there exists the corresponding error estimates, namely
	\begin{equation*}
		\begin{split}
			\left\|V^{n}-V(t_{n})\right\|_{L^2(\Omega)}
			& \leq \left( c \tau^{m+1} t_{n}^{-m-1} + c\tau^{k} t_{n}^{-k} \right) \left\| v \right\|_{L^2(\Omega)} \\ 
			& \quad +  \left( c\tau^{m+1}t_{n}^{-m} + c\tau^{k}t_{n}^{1-k} \right) \left\| \chi_{_D} \right\|_{L^2(\Omega)}.
		\end{split}
	\end{equation*}
\end{remark}

\section{Numerical experiments}\label{Sec6}

We discretize the spatial direction by the spectral collocation via the Chebyshev--Gauss--Lobatto points \cite{Shen2011Spectral}.
The discrete $L^2$-norm ($||\cdot||_{l_2}$) is utilized to measure the numerical errors at the terminal time, e.g., $t=t_N=1$.
Since the analytic solution is unknown, the convergence rate of the numerical results is computed by
\begin{equation*}
{\rm Convergence ~Rate}=\frac{\ln \left(||u^{N/2}-u^{N}||_{l_2}/||u^{N}-u^{2N}||_{l_2}\right)}{\ln 2}.
\end{equation*}
All numerical experiments are programmed using Julia 1.8.0.
{ Note that in the numerical scheme \eqref{IDWSBDF1}, the factor $\tau^m$ in the denominator can lead to significant round-off errors when $m$ is large.
In our computations, we use multiple-precision floating-point arithmetic to mitigate round-off errors during evaluation.}

For the sake of brevity, we primarily  employ the corrected ID$m$-WSBDF$k$  with $\beta=3$, $k=7$  in \eqref{IDWSBDF} for simulating the model \eqref{PQ}, since the similar numerical results can be obtained for  $1\leq m\leq k<7$.
Let $T=1$ and $\Omega=(-1, 1)$.
We consider the following two examples to illustrate the convergence analysis.
\begin{description}
  \item[(a)] $v(x)= \sqrt{1-x^2}$ and  $f = 0$.
  \item[(b)] $v(x)= \sqrt{1-x^2}$ and $f(x,t)= \cos(t) \left( 1 +  \chi_{\left(0,1\right)}\left(x\right) \right)$.
\end{description}

\begin{table}[H]  
{\small \begin{center}
\caption{The discrete $L^2$-norm $||u^{N}-u^{2N}||$ and convergence order for case \textbf{(a)}.}
\begin{tabular}{|c| c c c c c c|}
\hline
  $m$    & $N=200$     & $N=400$     &  $N=800$    &  $N=1600$   &  $N=3200$   & Rate     \\ \hline
  0      & 2.0460e-03  & 9.9978e-04  & 4.9424e-04  & 2.4572e-04  & 1.2251e-04  & $\approx$ 1.00     \\ \hline
  1      & 1.2056e-06  & 3.0138e-07  & 7.5345e-08  & 1.8836e-08  & 4.7090e-09  & $\approx$ 2.00     \\ \hline
  2      & 2.2042e-07  & 2.6721e-08  & 3.2960e-09  & 4.0927e-10  & 5.0990e-11  & $\approx$ 3.00     \\ \hline
  3      & 1.3827e-08  & 5.3242e-10  & 3.2759e-11  & 2.0315e-12  & 1.2647e-13  & $\approx$ 4.00     \\ \hline
  4      & 8.6202e-09  & 9.2812e-12  & 2.8477e-13  & 8.8183e-15  & 2.7431e-16  & $\approx$ 5.00     \\ \hline
  5      & 1.8092e-07  & 1.5535e-13  & 2.3739e-15  & 3.6703e-17  & 5.7048e-19  & $\approx$ 6.00     \\ \hline
  6      & 1.0977e-06  & 1.0638e-14  & 1.9259e-17  & 1.4865e-19  & 1.1544e-21  & $\approx$ 7.00     \\ \hline
  7      & 5.1616e-06  & 5.1280e-14  & 4.9954e-19  & 3.2626e-21  & 2.3038e-23  & $\approx$ 7.14     \\ \hline
\end{tabular}\label{table:6.1}
\end{center}}
\end{table}

\begin{table}[H]  
{\small \begin{center}
\caption{The discrete $L^2$-norm $||u^{N}-u^{2N}||$ and convergence order for case \textbf{(b)}.}
\begin{tabular}{|c| c c c c c c|}
\hline
  $m$    & $N=200$     & $N=400$     &  $N=800$    &  $N=1600$   &  $N=3200$   & Rate      \\ \hline
  0      & 6.4833e-04  & 3.1681e-04  & 1.5661e-04  & 7.7866e-05  & 3.8823e-05  & $\approx$ 1.00     \\ \hline
  1      & 3.8207e-07  & 9.5502e-08  & 2.3875e-08  & 5.9688e-09  & 1.4922e-09  & $\approx$ 2.00     \\ \hline
  2      & 7.0592e-08  & 8.4679e-09  & 1.0444e-09  & 1.2969e-10  & 1.6158e-11  & $\approx$ 3.00     \\ \hline
  3      & 1.0420e-08  & 1.6885e-10  & 1.0389e-11  & 6.4423e-13  & 4.0107e-14  & $\approx$ 4.00     \\ \hline
  4      & 8.5170e-09  & 2.9801e-12  & 9.1317e-14  & 2.8259e-15  & 8.7884e-17  & $\approx$ 5.00     \\ \hline
  5      & 1.6041e-07  & 5.8791e-14  & 8.8085e-16  & 1.3505e-17  & 2.0908e-19  & $\approx$ 6.01     \\ \hline
  6      & 9.7004e-07  & 9.5145e-15  & 1.5120e-17  & 1.1326e-19  & 8.6659e-22  & $\approx$ 7.03     \\ \hline
  7      & 1.4614e-04  & 3.0090e-12  & 4.7157e-16  & 3.5026e-18  & 2.6685e-20  & $\approx$ 7.03     \\ \hline
\end{tabular}\label{table:6.3}
\end{center}}
\end{table}

Tables \ref{table:6.1} and \ref{table:6.3} demonstrate that the corrected ID$m$-WSBDF$7$ method achieves high-order convergence with $\mathcal{O}(\tau^{\min\{m+1,7\}})$, which is in  agreement with Theorems \ref{Theorem3.2} and \ref{Theorem3.3}, respectively. Furthermore, applying Theorem \ref{Theorem3.2} again implies  that the WSBDF$k$ method in \eqref{WSBDF} exhibits only first-order convergence, since  $m=0$ (see Table \ref{table:2.1}). It's worth noting that the ID$1$-WSBDF$7$ method \eqref{IDWSBDFad1} restores second-order convergence, as observed in Table \ref{table:2.2}, owing to its equivalence with the corrected ID$1$-WSBDF$7$ method.




\end{document}